\numberwithin{equation}{section}
\newtheorem{dfn}{Definition}[section]
\newtheorem{thm}[dfn]{Theorem}
\newtheorem{lma}[dfn]{Lemma}
\newtheorem{crlre}[dfn]{Corollary}
\newtheorem{rmrk}[dfn]{Remark}
\DeclarePairedDelimiterX{\norm}[1]{\lVert}{\rVert}{#1}
\DeclarePairedDelimiterX{\bnorm}[1]{\big\lVert}{\big\rVert}{#1}
\DeclarePairedDelimiterX{\Bnorm}[1]{\Big\lVert}{\Big\rVert}{#1}
\newcommand\at[2]{\left.#1\right|_{#2}}
\newcommand{\R}{\mathbb{R}}
\newcommand{\hil}{\mathcal{H}}
\newcommand{\hils}{\mathcal{B}_2(\hil)}
\newcommand{\boh}{\mathcal{B}_1(\hil)}
\newcommand{\dds}{\dfrac{d}{ds}}
\newcommand{\ddt}{\dfrac{d}{dt}}
\newcommand{\ipi}{\int_{0}^{2\pi}}
\newcommand{\cir}{\mathbb{T}}
\newcommand{\la}{\langle}
\newcommand{\ra}{\rangle}
\newcommand{\tl}{\tau_{_l}}
\newcommand{\fl}{f_{_l}}
\newcommand{\gkl}{g_{_{kl}}}
\begin{document}

\title[Koplienko-Neidhardt trace formula for unitaries $-$ A new proof]{Koplienko-Neidhardt trace formula for unitaries $-$ A new proof}

\dedicatory{Dedicated to Professor Kalyan Bidhan Sinha on the occasion of his 75th birthday}

\author[Chattopadhyay] {Arup Chattopadhyay}
\address{Department of Mathematics, Indian Institute of Technology Guwahati, Guwahati, 781039, India}
\email{arupchatt@iitg.ac.in, 2003arupchattopadhyay@gmail.com}

\author[Das]{Soma Das}
\address{Department of Mathematics, Indian Institute of Technology Guwahati, Guwahati, 781039, India}
\email{soma18@iitg.ac.in, dsoma994@gmail.com}

\author[Pradhan]{Chandan Pradhan}
\address{Department of Mathematics, Indian Institute of Technology Guwahati, Guwahati, 781039, India}
\email{chandan.math@iitg.ac.in, chandan.pradhan2108@gmail.com}

\subjclass[2010]{47A55, 47A56, 47A13, 47B10}

\keywords{Spectral shift function; Trace formula; Perturbations; Trace class; Hilbert Schmidt class}

\begin{abstract}
 Koplienko \cite{Ko} found a trace formula for perturbations of self-adjoint operators by operators of Hilbert-Schmidt class $\mathcal{B}_2(\mathcal{H})$. Later in 1988, a similar formula was obtained by Neidhardt \cite{NH} in the case of unitary operators. In this article, we give a still another proof of Koplienko-Neidhardt trace formula in the case of unitary operators by reducing the problem to a finite dimensional one as in the proof of Krein's trace formula by Voiculescu \cite{Voi}, Sinha and Mohapatra \cite{MoSi94,MoSi96}. 

\end{abstract}
\maketitle

\section{Introduction}
One of the fundamental concept in perturbation theory is the existence of spectral shift function and the associated trace formula. The notion of first order spectral shift function originated from Lifshits' work on theoretical physics \cite{Lif} and later the mathematical theory of this object elaborated by M.G. Krein in a series of papers, starting with \cite{Kr53}. In \cite{Kr53} (see also \cite{Kr83}), Krein proved that given two self-adjoint operators $H$ and $H_0$ (possibly unbounded) such that $H-H_0$ is trace class, then there exists a unique real valued $L^1(\mathbb{R})$ function $\xi$  such that 
\begin{equation}\label{inteq1}
 \text{Tr}~\{\phi(H)-\phi(H_0)\} = \int_{\mathbb{R}} \phi'(\lambda)~\xi(\lambda)~d\lambda
\end{equation}
holds for sufficiently nice functions $\phi$. The function $\xi$ is known as Krein's spectral shift function and the relation \eqref{inteq1} is called Krein's trace formula. The original proof of Krein uses analytic function theory. Later in \cite{BS2} (see also \cite{BS1}), Birman and Solomyak approached the trace formula \eqref{inteq1} using the theory of double operator integrals, though they failed to prove the absolute continuity of  the spectral shift. In 1985, Voiculescu \cite{Voi} gave an alternative proof of the trace formula \eqref{inteq1} by adapting the proof of classical Weyl-von Neumann theorem for the case of bounded self-adjoint operators and later Sinha and Mohapatra extended Voiculescu's method to the unbounded self-adjoint \cite{MoSi94} and unitary cases \cite{MoSi96}.  Recently, Peller \cite{Pe16} describe completely the class of functions (viz, the class of operator Lipschitz functions
on $\mathbb{R}$), for which the Krein’s trace formula \eqref{inteq1} holds. A similar result was obtained by Krein in \cite{Kr62} for pair of unitary operators $\big\{U,U_0\big\}$ such that $U-U_0$ is trace class. For each such pair there exists a real valued $L^1([0,2\pi])$- function $\xi$, unique modulo an additive constant, (called a spectral shift function for $\{U,U_0\}$) such that
 \begin{equation}\label{intequ2}
 \text{Tr}~\big\{\phi(U)-\phi(U_0)\big\} = \int_{0}^{2\pi} \frac{d}{dt}\big\{\phi(\textup{e}^{it})\big\}~\xi(t)~dt,
\end{equation}
whenever $\phi'$ has absolutely convergent Fourier series. Recently, Aleksandrov and Peller \cite{AP16} extended the formula \eqref{intequ2} for arbitrary operator Lipschitz functions $\phi$ on the unit circle $\mathbb{T}$.

The modified second order spectral shift function for Hilbert-Schmidt perturbations was introduced by Koplienko in \cite{Ko}. Let $H$ and $H_0$ be two self-adjoint operators in a separable Hilbert space $\mathcal{H}$ such that $H-H_0=V\in \mathcal{B}_2(\mathcal{H})$. In this case the difference $\phi(H)-\phi(H_0)$ is no longer of trace-class and one has to consider instead
\[
 \phi(H)-\phi(H_0)-\at{\dfrac{d}{ds}\Big(\phi(H_0+sV)\Big)}{s=0},
\]
where $\at{\dfrac{d}{ds}\Big(\phi(H_0+sV)\Big)}{s=0}$ denotes the G$\hat{a}$teaux derivative of $\phi$ at $H_0$ in the direction $V$ (see \cite{RB}) and and find a trace formula for the above expression under certain assumptions on $\phi$. Under the above hypothesis, Koplienko's formula asserts that there exists a unique function $\eta \in L^1(\mathbb{R})$ such that
\begin{equation}\label{intequ3}
 \operatorname{Tr}\Big\{\phi(H)-\phi(H_0)-\at{\dfrac{d}{ds}\Big(\phi(H_0+sV)\Big)}{s=0}\Big\}=\int_{\mathbb{R}} \phi''(\lambda)~\eta(\lambda)~d\lambda
\end{equation}
for rational functions $\phi$ with poles off $\mathbb{R}$. The function $\eta$ is known as Koplienko spectral shift function corresponding to the pair $(H_0,H)$. In 2007, Gesztesy et al. \cite{GePu} gave an alternative proof of the formula \eqref{intequ3} for the bounded case and in 2009, Dykema and Skripka \cite{DS,SK10}, and earlier Boyadzhiev \cite{BO} obtained the formula \eqref{intequ3} in the semi-finite von Neumann algebra setting. Later in 2012, Sinha and the first author of this article provide an alternative proof of the formula \eqref{intequ3} using
the idea of finite dimensional approximation method as in the works of Voiculescu \cite{Voi}, Sinha and Mohapatra \cite{MoSi94,MoSi96}. In this connection it is worth mentioning that in 1984, Koplienko also conjectured about the existence of the higher order spectral shift measures $\nu_n$, $n>2$, for the perturbation $V\in \mathcal{B}_n(\mathcal{H})$ and it is remarkable to note that recently Potapov, Skripka and Sukochev resolve affirmatively Koplienko's conjecture and establishes the existence of higher order spectral shift function in their outstanding and beautiful paper \cite{PoSkSu13In} using the concept of multiple operator integral.

A similar problem for unitary operators was considered by Neidhardt \cite{NH}. Let $U$ and $U_0$ be two unitary operators on a separable Hilbert space $\hil$ such that $U-U_0~\in~\mathcal{B}_2(\hil)$. Then  $U=e^{iA}U_0$, where $A$ is a self-adjoint operator in $\mathcal{B}_2(\hil)$. Denote $U_s=e^{isA}U_0,~s\in \mathbb{R}$. Then it was shown in \cite{NH} that there exists a $L^1([0,2\pi]))$-function $\eta$ (unique upto an additive constant ) such that 
\begin{equation}\label{intequ4}
 \operatorname{Tr}\Big\{\phi(U)-\phi(U_0)-\at{\dfrac{d}{ds}\phi(U_s)}{s=0}\Big\}=\int_{0}^{2\pi} \dfrac{d^2}{dt^2} \big\{\phi (e^{it})\big\} \eta(t) dt,
\end{equation}
whenever $\phi''$ has absolutely convergent Fourier series. The function $\eta$ is known as Koplienko spectral shift function corresponding to the pair $(U_0,U)$. In \cite{Pe05}, Peller obtained better sufficient conditions on functions $\phi$, under which trace formulae \eqref{intequ3} and \eqref{intequ4} hold. In this connection, it is also worth mentioning that recently Potapov, Skripka and Sukochev proved higher order analogs of the formula \eqref{intequ4} in \cite{PoSkSu16}. For more about trace formulas and related topics, we refer the reader to (\cite{ MNP17, MNP1717, MNP19, Peller85, Peller06,  PoSkSu13, PoSkSu, PoSu, Sk, ST}) and the references cited therein. 

In this article we once again supply the new proof of Koplienko-Neidhardt trace formula \eqref{intequ4}, we believe for the first time, using the idea of finite dimensional approximation method as in the works of Voiculescu, Sinha and Mohapatra, referred earlier. The rest of the paper is organized as follows: In Section 2, we give a proof of Koplienko-Neidhardt trace formula when $dim~\mathcal{H}<\infty$. Section 3 is devoted to the reduction of the problem to finite dimensions and in Section 4 we prove the trace formula by a limiting argument. 

\section{Koplienko-Neidhardt trace formula in finite dimension}
	Here, $\mathcal{H}$ will denote the separable Hilbert space we work in;  $\mathcal{B}(\mathcal{H})$, $\mathcal{B}_1(\mathcal{H})$, $\mathcal{B}_2(\mathcal{H})$ the set of bounded, trace class, Hilbert-Schmidt class operators in $\mathcal{H}$ respectively with $\|\cdot\|,\|\cdot\|_1, \|\cdot\|_2$ as the associated norms and $\text{Tr} \{A\}$ denote the trace of a trace class operator $A$.
	\begin{thm}\label{th1}
		Let $U$ and $U_0$ be two unitary operators on a separable Hilbert space $\hil$ such that $U-U_0~\in~\mathcal{B}_2(\hil)$. Then there exists a self-adjoint operator $A\in~\mathcal{B}_2(\hil)$ such that $U=e^{iA}U_0$. 
	\end{thm}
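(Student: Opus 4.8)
The plan is to reduce the two-operator statement to a statement about a single unitary that is a Hilbert--Schmidt perturbation of the identity, and then to invoke the Borel functional calculus with a carefully chosen branch of the argument. First I would set $V := U U_0^*$. Since $U$ and $U_0$ are unitary, $V$ is unitary, and moreover $V - I = (U - U_0)U_0^* \in \hils$ because $U - U_0 \in \hils$, the factor $U_0^*$ is bounded, and $\hils$ is a two-sided ideal in $\mathcal{B}(\hil)$. Thus it suffices to produce a self-adjoint $A \in \hils$ with $e^{iA} = V$: then $e^{iA}U_0 = V U_0 = U$, as required.

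Next I would define $A$ by functional calculus. Let $E(\cdot)$ be the spectral measure of $V$, supported on $\cir$, and let $g\colon \cir \to (-\pi,\pi]$ be the principal branch of the argument, $g(e^{i\theta}) = \theta$ for $\theta \in (-\pi,\pi]$. Setting $A := g(V) = \int_{\cir} g(z)\, dE(z)$ gives a bounded self-adjoint operator, since $g$ is real-valued and bounded; and because $e^{i g(z)} = z$ on $\cir$, we obtain $e^{iA} = \int_{\cir} z\, dE(z) = V$. The only remaining point, which is really the crux of the statement, is that this particular $A$ lies in $\hils$.

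For the Hilbert--Schmidt bound I would exploit the elementary pointwise estimate
\[
|g(z)| \le \frac{\pi}{2}\,|z - 1|, \qquad z \in \cir,
\]
which follows from $|e^{i\theta} - 1| = 2|\sin(\theta/2)| \ge \tfrac{2}{\pi}|\theta|$ for $|\theta| \le \pi$ (concavity of sine on $[0,\pi/2]$, with equality at $z = -1$). Squaring and integrating against the positive measure $dE$ yields the operator inequality $A^2 \le \tfrac{\pi^2}{4}\,|V - I|^2$, whence by monotonicity of the trace on positive operators
\[
\norm{A}_2^2 = \operatorname{Tr}(A^2) \le \frac{\pi^2}{4}\operatorname{Tr}\big(|V - I|^2\big) = \frac{\pi^2}{4}\,\norm{V - I}_2^2 < \infty.
\]
Hence $A \in \hils$, completing the argument.

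The step I expect to be delicate is not the existence of a logarithm but the membership $A \in \hils$, and the whole thing hinges on choosing the branch cut at $-1$ rather than at $+1$: with the principal branch, $g$ is continuous at $z = 1$ and vanishes there at the rate $|z-1|$, so the linear majorization above holds uniformly on $\cir$. Any branch discontinuous at $1$ would keep $g$ bounded away from $0$ near $z=1$ and destroy the estimate, even though the resulting $A$ would still be a bounded self-adjoint logarithm. Everything else — unitarity of $V$, the ideal property giving $V - I \in \hils$, and the identity $e^{iA} = V$ — is routine.
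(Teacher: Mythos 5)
Your proposal is correct and follows essentially the same route as the paper: both define $A$ as the principal-branch logarithm of $UU_0^*$ (so that $\sigma(A)\subseteq(-\pi,\pi]$) and then derive $A\in\mathcal{B}_2(\hil)$ from the pointwise bound $|\theta|\le\tfrac{\pi}{2}|e^{i\theta}-1|$ via the spectral theorem, the only cosmetic difference being that the paper computes $\norm{A}_2^2$ by summing $\norm{Af_i}^2$ over an orthonormal basis while you phrase it as the operator inequality $A^2\le\tfrac{\pi^2}{4}|UU_0^*-I|^2$ followed by monotonicity of the trace.
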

	\begin{proof}
		Since $UU_0^*$ is a unitary operator, then there is a self-adjoint operator $A$ with the spectrum in $(-\pi,\pi]$ (that is, $\sigma(A)\subseteq(-\pi,\pi]$) such that  $UU_0^*=e^{iA}$ and hence $U=e^{iA}U_0$. Let $\{f_i\}$ be any orthonormal basis of $\hil$. Then
		from the inequality $|x|\leq~\dfrac{\pi}{2}|e^{ix}-1|$ for $x\in{(-\pi,\pi]}$ and by using the spectral theorem  we conclude 
		\begin{align*}
		||A||_2^2 =\sum_{i=1}^{\infty}||Af_i||^2
		=\sum_{i=1}^{\infty}\int_{-\pi}^{\pi}|\lambda|^2~ ||E(d\lambda) f_i||^2
		& \leq~\dfrac{\pi^2}{4}\sum_{i=1}^{\infty}\int_{-\pi}^{\pi}|e^{i\lambda}-1|^2~||E(d\lambda) f_i||^2\\
		& =\dfrac{\pi^2}{4}\left\|e^{iA}-I\right\|_2^2 = \dfrac{\pi^2}{4}\|U-U_0\|_2^2,
		\end{align*}
		where $E(\cdot)$ is the spectral measure corresponding to the self-adjoint operator $A$. Thus from the hypothesis we conclude that $A\in\mathcal{B}_2(\hil)$. This completes the proof. 
	\end{proof}
	The following theorem states Koplienko-Neidhardt trace formula in finite dimension.  
	\begin{thm}\label{th2}
		Let $U$ and $U_0$ be two unitary operators in a separable Hilbert space $\hil$ such that $U-U_0~\in~\mathcal{B}_2(\hil)$ and $p(\lambda)=\lambda^r~(r\in\mathbb{Z}),~\lambda~\in~\cir$.
		\begin{enumerate}[(i)]
			\item Then 
			{\begin{align}\label{derivexpeq}
			&\frac{d}{ds}(p(U_s))=\begin{cases}
			\sum\limits_{k=0}^{r-1}U_s^{r-k-1}~(iA)~ U_s^{k+1} &\text{ if } r\geq 1,\\
			\qquad 0 &\text{ if } r=0,\\
			-\sum\limits_{k=0}^{|r|-1}(U_s^*)^{|r|-k}~(iA)~( U_s^*)^{k} &\text{ if } r\leq -1,
			\end{cases}
			\end{align}} where $U_s=e^{isA}U_0,~ s\in\R$.
			\item If furthermore $\dim(\hil)<\infty$, then there exists a $L^1([0,2\pi])$- function $\eta$ (unique upto an additive constant) such that
			\begin{equation}\label{11}
			\operatorname{Tr}\Big\{p(U)-p(U_0)-\at{\dfrac{d}{ds}(p(U_s))}{s=0}\Big\}=\int_{0}^{2\pi}\frac{d^2}{dt^2}\big\{p(e^{it})\big\}~\eta(t)~dt,
			\end{equation} where $p(\cdot)$ is any trigonometric polynomial on $\cir$ with complex coefficients and 
			\begin{equation}\label{eqfin0}
			\eta(t)=
			 \int_{0}^{1} \operatorname{Tr}\big\{A[E_0(t)-E_s(t)]\big\}ds~,~~t\in[0,2\pi]
			\end{equation}
 where $E_s(\cdot)$ is the spectral measure of the unitary operator $U_s$. Moreover, 
			\begin{equation}\label{eqfin1}
			 \operatorname{Tr}\Big\{p(U)-p(U_0)-\at{\dfrac{d}{ds}(p(U_s))}{s=0}\Big\}=\int_{0}^{2\pi}\frac{d^2}{dt^2}\big\{p(e^{it})\big\}~\eta_o(t)~dt,
			\end{equation}
where \begin{equation*}
       \eta_o(t) = \eta(t) -\frac{1}{2\pi} \int_0^{2\pi} \eta(s) ds~,~~t\in[0,2\pi]\quad \text{and}\quad \|\eta_o\|_{L^1([0,2\pi])}\leq \frac{\pi}{2} \|A\|_2^2. 
      \end{equation*}
     \end{enumerate}
	\end{thm}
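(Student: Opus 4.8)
The plan is to handle the two parts in turn: part (i) by direct differentiation of a product, and part (ii) by reducing to the monomials $p(\lambda)=\lambda^r$ and then to a one--variable integration by parts, after which the stated $L^1$ bound is the only genuinely hard point.

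For part (i) I would begin from $\frac{d}{ds}U_s=iAU_s$, which is immediate from $U_s=e^{isA}U_0$ and the commutation of $A$ with $e^{isA}$. For $r\ge1$ the Leibniz rule applied to the $r$--fold product $U_s^r$ gives $\frac{d}{ds}U_s^r=\sum_{j=0}^{r-1}U_s^{\,j}(iAU_s)U_s^{\,r-1-j}$, and the substitution $k=r-1-j$ turns this into the first line of \eqref{derivexpeq}. The case $r=0$ is trivial. For $r\le-1$ I would write $U_s^r=(U_s^*)^{|r|}$, use the adjoint relation $\frac{d}{ds}U_s^*=-U_s^*(iA)$, and apply the Leibniz rule once more; re--indexing reproduces the third line of \eqref{derivexpeq}.

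For part (ii), since both sides of \eqref{11} are linear in $p$ and the function $\eta$ in \eqref{eqfin0} does not involve $p$, it suffices to prove the identity for each $p(\lambda)=\lambda^r$ with one and the same $\eta$. Writing $\Phi(s)=U_s^r$ and applying the fundamental theorem of calculus, $\operatorname{Tr}\{\Phi(1)-\Phi(0)\}=\int_0^1\operatorname{Tr}\{\Phi'(s)\}\,ds$; the cyclicity of the trace applied to the expression in (i) gives $\operatorname{Tr}\{\Phi'(s)\}=ir\operatorname{Tr}\{AU_s^r\}$ for every $r\in\mathbb{Z}$ (the $r\le-1$ computation yields the same value). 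Subtracting $\operatorname{Tr}\{\Phi'(0)\}$ then leaves $\operatorname{Tr}\{p(U)-p(U_0)-\at{\frac{d}{ds}(p(U_s))}{s=0}\}=ir\int_0^1\operatorname{Tr}\{A(U_s^r-U_0^r)\}\,ds$. I would next insert the spectral representation $U_s^r=\int_0^{2\pi}e^{irt}\,dE_s(t)$ and integrate by parts in $t$. With the convention $E_s(0)=0$ (assigning a possible eigenvalue $1$ to the angle $2\pi$), the function $\mu_s(t)=\operatorname{Tr}\{A(E_s(t)-E_0(t))\}$ vanishes at $t=0$ and at $t=2\pi$, so the boundary terms disappear and one obtains $ir\int_0^1\operatorname{Tr}\{A(U_s^r-U_0^r)\}\,ds=-r^2\int_0^{2\pi}e^{irt}\eta(t)\,dt$ with $\eta$ exactly as in \eqref{eqfin0}. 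Since $\frac{d^2}{dt^2}\{e^{irt}\}=-r^2e^{irt}$, this is precisely \eqref{11}. The ambiguity modulo a constant is then transparent: replacing $\eta$ by $\eta+c$ changes the right side by $c\,[\frac{d}{dt}\{p(e^{it})\}]_0^{2\pi}=0$ by $2\pi$--periodicity, and normalizing to mean zero gives the canonical representative $\eta_o$ of \eqref{eqfin1}.

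The main obstacle is the estimate $\|\eta_o\|_{L^1([0,2\pi])}\le\frac{\pi}{2}\|A\|_2^2$. Here I would first rewrite $\eta$ in a manifestly second--order form: using $E_0(t)-E_s(t)=-\int_0^s\partial_\sigma E_\sigma(t)\,d\sigma$ and interchanging the $s$-- and $\sigma$--integrations yields $\eta(t)=-\int_0^1(1-\sigma)\operatorname{Tr}\{A\,\partial_\sigma E_\sigma(t)\}\,d\sigma$. Through $\partial_\sigma U_\sigma=iAU_\sigma$ and the Riesz formula $\partial_\sigma E_\sigma(t)=\frac{1}{2\pi i}\oint_{\Gamma_t}(\zeta-U_\sigma)^{-1}(iAU_\sigma)(\zeta-U_\sigma)^{-1}\,d\zeta$, the integrand now carries two factors of $A$, one from the weight and one from the generator, so a Hilbert--Schmidt Cauchy--Schwarz estimate produces the quadratic quantity $\|A\|_2^2$ rather than a first--order norm; this already explains the correct order. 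The delicate points are two: controlling the eigenvalue--crossing singularities (the family $E_\sigma(t)$ is only of bounded variation in $t$, so $\partial_\sigma E_\sigma(t)$ must be read as a measure, or via the contour above for a.e.\ $t$), and extracting the sharp constant $\frac{\pi}{2}$, which I expect to originate from the same circle geometry as in Theorem~\ref{th1}, namely the bound $|x|\le\frac{\pi}{2}|e^{ix}-1|$ on $(-\pi,\pi]$. This is the exact analogue of the $L^1$ estimate for the Koplienko spectral shift function in the self--adjoint case, and I would model the computation on that argument.
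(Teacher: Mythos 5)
Your part (i) and the derivation of the trace identity \eqref{11} in part (ii) are correct and essentially identical to the paper's argument: cyclicity of the trace gives $\operatorname{Tr}\{\frac{d}{ds}U_s^r\}=ir\operatorname{Tr}\{AU_s^r\}$, the fundamental theorem of calculus in $s$ and one integration by parts in $t$ (with the normalization $E_s(0)=0$ killing the boundary terms) produce exactly the $\eta$ of \eqref{eqfin0}, and the passage from $\eta$ to $\eta_o$ is handled the same way.

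The genuine gap is the estimate $\|\eta_o\|_{L^1([0,2\pi])}\le\frac{\pi}{2}\|A\|_2^2$, which you correctly flag as the hard point but do not actually prove. Your proposed route --- writing $E_0(t)-E_s(t)=-\int_0^s\partial_\sigma E_\sigma(t)\,d\sigma$ and representing $\partial_\sigma E_\sigma(t)$ by a Riesz contour integral --- runs into real obstructions: the family $\sigma\mapsto E_\sigma(t)$ is not differentiable at parameter values where an eigenvalue of $U_\sigma$ crosses the point $e^{it}$, and a contour $\Gamma_t$ enclosing the arc up to $e^{it}$ necessarily meets the unit circle, hence possibly the spectrum; moreover a naive Cauchy--Schwarz on such a representation gives $\|A\|_2^2$ times an uncontrolled constant, not $\frac{\pi}{2}$. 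The paper proceeds by duality instead: for $f\in L^\infty([0,2\pi])$ with mean-zero part $f_o$ and $g(e^{it})=\int_0^t f_o(s)\,ds$ (well defined on $\cir$ because $\int_0^{2\pi}f_o=0$), Fubini and one more integration by parts convert $\int_0^{2\pi}f\,\eta_o\,dt$ into $\int_0^1\operatorname{Tr}\bigl[A\{g(U_s)-g(U_0)\}\bigr]ds$. The Birman--Solomyak double operator integral representation of $g(U_s)-g(U_0)$, together with the divided-difference bound
\begin{equation*}
\Bigl|\tfrac{g(e^{i\lambda})-g(e^{i\mu})}{e^{i\lambda}-e^{i\mu}}\Bigr|\le\tfrac{\pi}{2}\|f_o\|_\infty\le\pi\|f\|_\infty ,
\end{equation*}
yields $\|g(U_s)-g(U_0)\|_2\le\pi\|f\|_\infty\|U_s-U_0\|_2\le\pi\|f\|_\infty\, s\|A\|_2$, and integrating $s$ over $[0,1]$ produces the factor $\frac12$; the constant $\frac{\pi}{2}$ thus comes from the arc-length versus chord-length comparison on $\cir$ applied to $g$, not from the inequality $|x|\le\frac{\pi}{2}|e^{ix}-1|$ of Theorem~\ref{th1} as you conjectured. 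Without this (or an equivalent) mechanism your proof of the norm bound is incomplete, and that bound is needed later in the paper to pass to the infinite-dimensional limit.
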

	\begin{proof}
$(i)$~Since $U-U_0\in\mathcal{B}_2(\hil)$, then by the above Theorem ~\ref{th1} there exists a self-adjoint operator $A\in~\mathcal{B}_2(\hil)$ such that $U=e^{iA}U_0$. Denote $U_s=e^{isA}U_0,~s\in \R$ and note that each $U_s$ is an unitary operator. For $p(\lambda)=\lambda^r ~(r\geq 1),~\lambda\in \cir$, we have
\begin{align*}
\begin{split}
\frac{p(U_{s+h})-p(U_s)}{h} 
= \frac{1}{h}\sum_{k=0}^{r-1}
U_{s+h}^{r-k-1}\left[U_{s+h}-U_s\right]U_s^k = \frac{1}{h}\sum_{k=0}^{r-1}
U_{s+h}^{r-k-1} \left[e^{ihA}-I\right]U_s^{k+1}, 
\end{split}
\end{align*}
which converges in operator norm to 
$$ \sum_{k=0}^{r-1}U_s^{r-k-1}~(iA)~U_s^{k+1} \quad \text{as}\quad h\rightarrow 0.$$
Similarly for $p(\lambda)=\lambda^r ~(r\leq -1),~\lambda\in \cir$, we have
\begin{align*}
\frac{p(U_{s+h})-p(U_s)}{h} 
=&~\frac{1}{h}\sum_{k=0}^{|r|-1}
(U_{s+h}^*)^{|r|-k-1}\left[U_{s+h}^*-U_s^*\right](U_s^*)^k\\
 =&~\frac{1}{h}\sum_{k=0}^{|r|-1}
(U_{s+h}^*)^{|r|-k-1}(U_s^*)\left[e^{-ihA}-I\right](U_s^*)^{k}, 
\end{align*}
which again converges in operator norm to 
$$ -\sum_{k=0}^{r-1}(U_{s}^*)^{|r|-k}(iA)(U_s^*)^{k} \quad \text{as}\quad h\rightarrow 0.$$
$(ii)$~By using the cyclicity of trace and noting that the trace now is a finite
sum, we have that for $p(\lambda)=\lambda^r ~(r\geq 1),~\lambda\in \cir$,
{\begin{align*}
&\operatorname{Tr}\Big\{p(U)-p(U_0)-\at{\dds p(U_s)}{s=0}\Big\}
=\operatorname{Tr}\Big\{\int_{0}^{1} \frac{d}{ds}(p(U_s))~ds\Big\} - \operatorname{Tr}\Big\{ \at{\dds p(U_s)}{s=0}\Big\}\\
&=\int_{0}^{1}\operatorname{Tr}\Big\{ \sum_{k=0}^{r-1}~U_s^{r-k-1}~(iA)~U_s^{k+1}\Big\}~ds -\operatorname{Tr}\Big\{\sum_{k=0}^{r-1}~ U_0^{r-k-1}~(iA)~U_0^{k+1}~\Big\}\\
&=\int_{0}^{1} r~\operatorname{Tr}\Big(iA U_s^{r}\Big)~ds -\int_{0}^{1} r~\operatorname{Tr}\Big(iA U_0^r\Big)~ ds 
= \operatorname{Tr}\Big\{r (iA) \int_{0}^{1} ds~\int_{0}^{2\pi} e^{irt}\big(E_s(dt)-E_0(dt)\big)\Big\},
\end{align*}}
where $E_s(\cdot)$ and $E_0(\cdot)$ are the spectral measures determined uniquely by the unitary operators $U_s$ and $U_0$ respectively such that the spectral measures are continuous at $t=0$, that is, $E_s(0)=0=E_0(0)$ (see page 281, \cite{RN}). Next by performing integration by-parts we have that 
\begin{align*}
& \operatorname{Tr}\Big\{p(U)-p(U_0)-\at{\dds p(U_s)}{s=0}\Big\} \\
&= \operatorname{Tr}\Big\{ r (iA) \int_{0}^{1} ds~  \Big(e^{irt} \big[E_s(t)-E_0(t)\big]\Big|_{t=0}^{2\pi}-ir \int_{0}^{2\pi}e^{irt} \big[E_s(t)-E_0(t)\big] dt\Big)~\Big\}\\
&=\int_{0}^{2\pi}(ir)^2 e^{irt}
\left(\int_0^1 \operatorname{Tr}\big\{A[E_0(t)-E_s(t)\big]\big\} ds\right) = \int_{0}^{2\pi} \frac{d^2}{dt^2}[p(e^{it})]~\eta(t)~dt,
\end{align*} 
where we have set $$\eta(t)=\int_{0}^{1} \operatorname{Tr}\big\{A [E_0(t)-E_s(t)]\big\} ds.$$
In similar manner, we can prove the identity (\ref{11}) for $p(\lambda)=\lambda^r ~(r\leq -1),~\lambda\in \cir$.\\
Now it is clear that $\eta~\in~L^1([0,2\pi])$ and therefore it makes sense to define
$$\eta_o(t)=\eta(t)-\dfrac{1}{2\pi}\int_{0}^{2\pi}\eta(s)ds,~~~t\in[0,2\pi].$$
Thus the assertion \eqref{eqfin1} follows from the following observation
\begin{align*}
\int_{0}^{2\pi} e^{imt}\eta_0(t)&=\int_{0}^{2\pi} e^{imt}\left[\eta(t)-\dfrac{1}{2\pi}\int_{0}^{2\pi}\eta(s)ds\right]dt\\
&=\int_{0}^{2\pi} e^{imt}\eta(t)dt-\dfrac{1}{2\pi}\int_{0}^{2\pi} \eta(s)ds \int_{0}^{2\pi} e^{imt}dt =\int_{0}^{2\pi} e^{imt}\eta(t)dt  \quad \text{for} \quad m\in \mathbb{Z}\setminus \{0\}.
\end{align*}
Let $f\in L^{\infty}([0,2\pi])$, and consider
\begin{align*}
f_o=f-\dfrac{1}{2\pi}\int_{0}^{2\pi}f(s)ds.
\end{align*}
Then it is easy to observe that
\begin{align*}
\int_{0}^{2\pi} f(t) \eta_{o}(t) dt=\int_{0}^{2\pi} f_o(t) \eta(t) dt,\quad \int_{0}^{2\pi}f_o(t)dt=0, \quad \text{and} \quad \norm{f_o}_\infty\leq 2\norm{f}_\infty.
\end{align*}
Therefore by using the expression \eqref{eqfin0} of $\eta$ and using Fubini's theorem to interchange the orders of integration and integrating by-parts, we have for $g(e^{it})=\int_{0}^{t} f_0(s)ds, t\in [0,2\pi]$ that
\begin{align}\label{eqfin2}
\nonumber  & \int_{0}^{2\pi} f(t)\eta_o(t)dt = \int_{0}^{2\pi} f_o(t)\eta(t)dt
=\int_{0}^{2\pi} \frac{d}{dt}[g(e^{it})]\left(\int_{0}^{1} \operatorname{Tr}[A(E_0(t)-E_s(t))]ds\right)dt\\
\nonumber  &=\int_{0}^{1} ds\int_{0}^{2\pi} \frac{d}{dt}[g(e^{it})] \operatorname{Tr}[A(E_0(t)-E_s(t))]dt\\
\nonumber  &=\int_{0}^{1} ds\left\{g(e^{it})\operatorname{Tr}[A(E_0(t)-E_s(t))]\Big|_{~t=0}^{~2\pi}-\int_{0}^{2\pi} g(e^{it})\operatorname{Tr}[A(E_0(dt)-E_s(dt))]\right\}\\
&=-\int_{0}^{1} ds \int_{0}^{2\pi} g(e^{it})\operatorname{Tr}[A(E_0(dt)-E_s(dt))]
=\int_{0}^{1} \operatorname{Tr}[A\{g(U_s)-g(U_0)\}] ds.
\end{align}
On the other hand by using the idea of double operator integrals, introduced by Birman and Solomyak \cite{BS1,BS2,BS3} we have
\begin{align}\label{eqfin3}
 \nonumber g(U_s)-g(U_0) & = \ipi\ipi \left[g(e^{i\lambda})-g(e^{i\mu})\right] E_s(d\lambda)E_0(d\mu) \\
\nonumber  & =\ipi\ipi\dfrac{g(e^{i\lambda})-g(e^{i\mu})}{e^{i\lambda}-e^{i\mu}}~E_s(d\lambda)(U_s-U_0)E_0(d\mu)\\
& = \ipi\ipi\dfrac{g(e^{i\lambda})-g(e^{i\mu})}{e^{i\lambda}-e^{i\mu}} ~\mathcal{G}(d\lambda\times d\mu)(U_s-U_0),
\end{align}
where $\mathcal{G}(\Delta\times \delta )(V)= E_s(\Delta)VE_0(\delta)$ ($V\in \mathcal{B}_2(\hil)$ and $\Delta\times \delta \subseteq \mathbb{R}\times \mathbb{R}$) extends to a spectral measure on $\mathbb{R}^2$ in the Hilbert space $\mathcal{B}_2(\hil)$ (equipped with the inner product derived from the trace) and its total variation is less than or equal to
$\|V\|_2$. Thus by using the standard inequality $\vline\dfrac{g(e^{i\lambda})-g(e^{i\mu})}{e^{i\lambda}-e^{i\mu}}\vline \leq \dfrac{\pi}{2} \|f_o\|_\infty\leq \pi \|f\|_{\infty}$, for $\lambda,\mu\in [0,2\pi]$, we conclude from \eqref{eqfin3} that
\begin{align}\label{mainest1}
 \norm{g(U_s)-g(U_0)}_2\leq \pi \norm{f}_\infty\norm{U_s-U_0}_2,
\end{align}
which combining with \eqref{eqfin2} implies that
\begin{align*}
 &\vline \int_{0}^{2\pi} f(t)\eta_o(t)dt~\vline~\leq~\int_{0}^{1} ~||A||_2 ~||g(U_s)-g(U_0)||_2~ds 
 \leq~\pi \norm{f}_\infty\norm{A}_2\int_{0}^{1}~\norm{U_s-U_0}_2~ds\\ 
& \leq~\pi \norm{f}_\infty\norm{A}_2\int_{0}^{1}~s\norm{A}_2~ds =  \dfrac{\pi}{2}\norm{f}_\infty\norm{A}_2^2.
\end{align*}
Therefore by Hahn-Banach theorem we conclude that $$\norm{\eta_o}_{L^1([0,2\pi])}=\sup\limits_{f\in L^\infty([0,2\pi]):\norm{f}_\infty=1}\Bigg|\int_{0}^{2\pi} f(t) \eta_o(t) dt\Bigg|\leq  \dfrac{\pi}{2} \norm{A}_2^2.$$ 
This completes the proof. 
\end{proof}

	\section{Reduction to the finite dimension}
	The following lemma deals with the fact that given a unitary operator $U_0$, by suitably rotating the spectrum of $U_0$, or equivalently defining a new unitary operator $U_0^{'}=e^{-i\phi}U_0$ we get a self-adjoint operator $H_0$ such that $U_0^{'}$ is the Cayley transform of $H_0$, that is $U_0^{'}=(i-H_0)(i+H_0)^{-1}$. Note that the proof of this lemma is available in \cite[Theorem 1.1]{MoSi96} but for reader's convenience we are providing a proof herewith.
	\begin{lma}
	 Let $U_0$ be an unitary operator in a separable Hilbert space $\mathcal{H}$. Then there exists $\phi\in (-\pi, \pi]$ such that $\left(e^{i\phi}+U_0\right)$ is one to one, and hence invertible. Furthermore, the operator
	 \begin{align}\label{Redeq1}
	  \nonumber H_0=-i\left(-e^{i\phi}+U_0\right)\left(e^{i\phi}+U_0\right)^{-1} & = 
	  i\left(I-e^{-i\phi}U_0\right)\left(I+e^{-i\phi}U_0\right)^{-1}\\
	  &\equiv i\left(I-U_0^{'}\right)\left(I+U_0^{'}\right)^{-1}
	 \end{align}
is self-adjoint. 
\end{lma}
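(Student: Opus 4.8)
The plan is to recognize $H_0$ as the inverse Cayley transform of the rotated unitary $U_0' := e^{-i\phi}U_0$, so that the statement splits into two tasks: choosing $\phi$ so that $-1$ is not an eigenvalue of $U_0'$, and then reading off self-adjointness from the spectral theorem for unitary operators. First I would establish the existence of $\phi$. Since $(e^{i\phi}+U_0)x=0$ is equivalent to $U_0 x = -e^{i\phi}x$, the operator $e^{i\phi}+U_0$ fails to be one-to-one precisely when $-e^{i\phi}$ belongs to the point spectrum $\sigma_p(U_0)$. As $\mathcal{H}$ is separable and eigenvectors of the unitary $U_0$ corresponding to distinct eigenvalues are mutually orthogonal, $\sigma_p(U_0)$ is at most countable; hence the set of $\phi\in(-\pi,\pi]$ with $-e^{i\phi}\in\sigma_p(U_0)$ is at most countable, and I may pick $\phi$ in its co-countable complement. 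For such a $\phi$ the operator $e^{i\phi}+U_0 = e^{i\phi}\big(I+U_0'\big)$ is injective.

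Next I would promote injectivity to invertibility in the (possibly unbounded) sense required to form $H_0$. The operator $I+U_0'$ is normal, so $\ker(I+U_0') = \ker\big((I+U_0')^*\big)$, using $(U_0')^*=(U_0')^{-1}$, and therefore $\overline{\operatorname{Ran}(I+U_0')} = \big(\ker(I+U_0')\big)^{\perp} = \mathcal{H}$. Thus $(I+U_0')^{-1}$ is a densely defined, in general unbounded, operator, which is the exact sense in which $e^{i\phi}+U_0$ is \emph{invertible}, and it lets me define $H_0$ on the dense domain $\mathcal{D}(H_0) := \operatorname{Ran}(I+U_0')$.

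For self-adjointness the cleanest route is functional calculus. Writing the spectral resolution $U_0' = \int_{\mathbb{T}} z\, dE(z)$, injectivity of $I+U_0'$ says exactly that $E(\{-1\})=0$, so the function $h(z)=i\frac{1-z}{1+z}$ is defined $E$-almost everywhere; a one-line computation gives $h(e^{i\theta})=\tan(\theta/2)$, which is \emph{real}. Consequently $H_0=\int_{\mathbb{T}} h(z)\,dE(z)$ is self-adjoint by the spectral theorem, and since $h(z)(1+z)=i(1-z)$ holds pointwise, this integral coincides with $i(I-U_0')(I+U_0')^{-1}$ on $\operatorname{Ran}(I+U_0')$, matching the displayed formula in \eqref{Redeq1}.

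I expect the main obstacle to be the unbounded-operator bookkeeping: one must not treat $H_0$ as bounded, must carry the domain $\operatorname{Ran}(I+U_0')$ throughout, and must verify genuine self-adjointness rather than mere symmetry. The spectral-theorem argument above sidesteps the delicate deficiency-index calculation; a more hands-on alternative would be to check directly that $H_0$ is symmetric on its dense domain and that $\operatorname{Ran}(H_0\pm i)=\mathcal{H}$, both of which again follow from the invertibility of $I+U_0'$ together with the algebraic identities relating $I\pm U_0'$.
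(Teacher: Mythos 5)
Your choice of $\phi$ via countability of $\sigma_p(U_0)$ and your density argument for $\operatorname{Ran}(I+U_0')$ via $\ker(I+U_0')=\ker\bigl((I+U_0')^*\bigr)$ are exactly the paper's steps. Where you diverge is the final self-adjointness claim: the paper takes what you call the ``hands-on alternative,'' namely it reads off from the algebraic identities $i+H_0=2i(I+U_0')^{-1}$ and $i-H_0=2iU_0'(I+U_0')^{-1}$ that $\operatorname{Ran}(H_0\pm i)=\operatorname{Dom}(I+U_0')=\mathcal{H}$, so the symmetric operator $H_0$ has zero deficiency indices; this is short and requires no functional calculus. Your primary route, identifying $H_0$ with $\int_{\mathbb{T}}h(z)\,dE(z)$ for the real function $h(e^{i\theta})=\tan(\theta/2)$, is also viable but as written has a small gap: showing that the multiplication operator $M_h$ agrees with $i(I-U_0')(I+U_0')^{-1}$ \emph{on} $\operatorname{Ran}(I+U_0')$ only gives $H_0\subseteq M_h$, i.e.\ that $H_0$ is a symmetric restriction of a self-adjoint operator. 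To conclude $H_0=M_h$ you must also check $\operatorname{Dom}(M_h)\subseteq\operatorname{Ran}(I+U_0')$; this does hold, e.g.\ because $(1+z)^{-1}=\bigl(h(z)+i\bigr)/(2i)$ shows that any $x$ with $\int|h|^2\,d\norm{E(\cdot)x}^2<\infty$ lies in the range of $I+U_0'$, but the step needs to be stated. The paper's range computation delivers this domain equality automatically, which is what that approach buys; yours buys a transparent spectral picture of $H_0$ (it is unbounded exactly when $E$ charges every neighbourhood of $-1$) at the cost of the extra domain check.
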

\begin{proof}
 Since $\mathcal{H}$ is separable, then the eigenvalues of $U_0$ are at most countable. Therefore there exists some $\phi\in (-\pi, \pi]$ such that $-e^{i\phi}\notin \sigma_p(U_0)$ (set of eigenvalues of $U_0$) and hence $\left(I+U_0^{'}\right)$ is invertible, where $U_0^{'}= e^{-i\phi}U_0$. Note that the following identity 
 \begin{equation*}
  \text{Ran} \left(I+U_0^{'}\right)^{\perp}
  = \text{Ker} \left(I+U_0^{'*}\right) = \text{Ker} \left(I+U_0^{'}\right) =\{0\}
 \end{equation*}
implies that the operator $H_0$ in \eqref{Redeq1} is densely defined and furthermore $H_0$ is also symmetric in this domain. Next we also observe that the ranges of $i+H_0= 2i \left(I+U_0^{'}\right)^{-1}$ and of $i-H_0= 2iU_0^{'} \left(I+U_0^{'}\right)^{-1}$ are the whole Hilbert space since $\text{Ran} \Big\{\left(I+U_0^{'}\right)^{-1}\Big\} = \text{Dom} \left(I+U_0^{'}\right)=\mathcal{H}$ and $U_0^{'}$ is unitary. Thus $H_0$ is self-adjoint and hence the proof. 
\end{proof}
In this section we prove some estimates similar to those in Section 3 of \cite{ChSi,MoSi94,MoSi96} and use them
to reduce the problem in finite dimension.
Now we begin with a lemma  collecting some results \cite{ChSi,Kato, MoSi94,MoSi96} following from the Weyl-von Neumann type construction.

\begin{lma}\label{l1}
		Let $U_0$ and $H_0$ be as above. Then given a set of normalized vectors $\big\{f_{_l}\big\}_{1\leq l\leq L}$ in $\hil$ and $\epsilon>0$ there exist a finite rank projection $P$ such that
		\begin{align*}
		&(i)\hspace*{.1in} \norm{P^\perp f_{_l}}<\epsilon \quad \text{for} \quad 1\leq l\leq L,\\
		&(ii)\hspace*{.1in} P^\perp H_0P\in\hils~\text{and}~\norm{P^\perp H_0P}_2<\epsilon,\\
		&(iii)\hspace*{.1in}\norm{P^\perp (i\pm H_0)^{-1}P}_2<\epsilon,\\
		&(iv)\hspace*{.1in}\text{for~any~integer~}m,~\norm{P^\perp U_0^m P}_2<2|m|\epsilon.
		\end{align*}
	\end{lma}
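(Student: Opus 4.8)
The plan is to build the finite-rank projection $P$ by a Weyl--von Neumann type exhaustion, controlling the four quantities simultaneously. I would start from the self-adjoint operator $H_0$ (the Cayley transform partner of $U_0'$ constructed in the previous lemma) because commutator and compactness estimates are cleanest for self-adjoint operators, and then transfer the conclusions back to $U_0$ through the resolvent $\left(i\pm H_0\right)^{-1}$ and the spectral representation of $U_0$. The standard device is to approximate the spectral measure of $H_0$ by a finite partition: choose a large interval $[-N,N]$ capturing almost all the spectral mass of each $f_{_l}$, subdivide it into small subintervals, and let $P$ project onto a finite-dimensional space spanned by suitably chosen vectors $E_{H_0}(\Delta_j)f_{_l}$ together with enough additional vectors to make $P^\perp H_0 P$ small in Hilbert--Schmidt norm. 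Property $(i)$ is then immediate from the choice of $N$ and the inclusion of the $f_{_l}$'s (up to $\epsilon$) in the range of $P$.

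For $(ii)$ the key point is that $P^\perp H_0 P = P^\perp [H_0, P]$ essentially measures how far $P$ is from commuting with the spectral projections of $H_0$; by refining the partition so that on each subinterval $H_0$ varies by at most $\delta$, the off-diagonal block $P^\perp H_0 P$ can be made small in the $\|\cdot\|_2$ norm, the Hilbert--Schmidt bound coming from the finite rank of $P$ multiplied by the mesh size $\delta$. Property $(iii)$ I would deduce from $(ii)$ via the resolvent identity: writing
\begin{align*}
P^\perp (i\pm H_0)^{-1} P = -P^\perp (i\pm H_0)^{-1}\,[H_0,P]\,(i\pm H_0)^{-1}P \mp \text{(diagonal terms)},
\end{align*}
and using $\norm{(i\pm H_0)^{-1}}\leq 1$ together with the smallness of $P^\perp H_0 P$, so that the whole expression is controlled by a constant times $\norm{P^\perp H_0 P}_2$. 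Here I must be slightly careful because $H_0$ may be unbounded, so the commutator manipulation should be justified on the dense domain and then extended by continuity.

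Finally, for $(iv)$ I would express $U_0^m$ in terms of $H_0$. Since $U_0' = (i-H_0)(i+H_0)^{-1}$ and $U_0 = e^{i\phi}U_0'$, positive and negative powers of $U_0$ are rational functions of the resolvent $(i+H_0)^{-1}$. For a single power the estimate $\norm{P^\perp U_0 P}_2 < 2\epsilon$ follows from $(ii)$ and $(iii)$ by writing $U_0'$ as a product involving $(i\mp H_0)$ and $(i\pm H_0)^{-1}$ and commuting $P$ through one factor at a time, each swap costing a term bounded by $\epsilon$; the factor $2$ absorbs the two resolvent-type contributions. The bound for general $m$ then comes from the telescoping identity
\begin{align*}
P^\perp U_0^m P = \sum_{j=0}^{m-1} P^\perp U_0^{\,j}\big(U_0 P - P U_0\big)U_0^{\,m-1-j}P + P^\perp P\, U_0^m,
\end{align*}
(with the analogous expansion for negative $m$ using $U_0^*$), since $P^\perp P = 0$ and each summand is bounded in $\|\cdot\|_2$ by $\norm{P^\perp U_0 P}_2 < 2\epsilon$ after using unitarity of the intervening powers. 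The main obstacle I anticipate is the bookkeeping in $(iv)$: one must verify that commuting $P$ past a power of $U_0$ genuinely reduces to the single-step commutator $U_0 P - P U_0$ controlled by $(ii)$--$(iii)$, and that the unboundedness of $H_0$ does not obstruct the algebra; once the single-power estimate is secured, the linear-in-$|m|$ growth is a routine telescoping argument.
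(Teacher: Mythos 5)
Your construction of $P$ and your handling of $(i)$--$(ii)$ are essentially the paper's: partition the spectrum of $H_0$ into cells of small mesh, let $P$ be spanned by the normalized vectors $F(\Delta_k)f_{_l}$, and convert an operator-norm bound into a Hilbert--Schmidt bound via the factor $\sqrt{\dim P}$. Where you genuinely diverge is $(iii)$. You bootstrap it from $(ii)$ through the resolvent--commutator identity $P^\perp(i\pm H_0)^{-1}P=\mp P^\perp(i\pm H_0)^{-1}[H_0,P](i\pm H_0)^{-1}P$; this is legitimate, since $P\hil\subseteq\text{Dom}(H_0)$ makes $H_0P$ bounded, $PH_0$ extends to $(H_0P)^*$, and $[H_0,P]=P^\perp H_0P-(P^\perp H_0P)^*$ gives $\norm{[H_0,P]}_2\leq 2\norm{P^\perp H_0P}_2$. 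The paper instead proves $(iii)$ directly by the same spectral-cell mechanism as $(ii)$, using $\left|(\lambda\pm i)^{-1}-(\lambda_k\pm i)^{-1}\right|\leq|\lambda-\lambda_k|$ on each cell and the fact that $P^\perp$ annihilates $(\lambda_k\pm i)^{-1}\gkl$. Your route is shorter given $(ii)$ but costs a factor of $2$ (harmless, rescale $\epsilon$); the paper's is self-contained and keeps the stated constant.

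The one place your sketch would need actual repair is the single-power estimate in $(iv)$. ``Commuting $P$ through one factor at a time'' in the product $(i\mp H_0)(i\pm H_0)^{-1}$ produces the cross term $P^\perp(i\mp H_0)P^\perp(i\pm H_0)^{-1}P$, in which the unbounded factor $P^\perp(i\mp H_0)P^\perp$ cannot be separated from the resolvent and estimated on its own; as written, the swap does not ``cost a term bounded by $\epsilon$.'' The clean fix --- and what the paper does --- is the algebraic identity $(i\mp H_0)(i\pm H_0)^{-1}=2i(i\pm H_0)^{-1}-I$, so that $P^\perp U_0^{\pm1}P=2ie^{\pm i\phi}P^\perp(i\pm H_0)^{-1}P$ and the $m=\pm1$ case of $(iv)$ is exactly twice $(iii)$, with no commutation needed. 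Your telescoping for general $m$ then works, with one bookkeeping caveat: $U_0P-PU_0$ has two off-diagonal blocks, $P^\perp U_0P$ and $-PU_0P^\perp$, the second controlled by the $m=-1$ estimate, so you land at $4|m|\epsilon$ rather than $2|m|\epsilon$ --- again only a constant (the paper's induction $P^\perp U_0^{m\pm1}P=P^\perp U_0^{m}(P+P^\perp)U_0^{\pm1}P$ keeps the factor $2$).
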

	\begin{proof}
	Let $F(\cdot)$ be be the spectral measure associated with the self-adjoint operator $H_0$. As in the proof of Proposition 3.1 in \cite{ChSi} we set $a$, $F_k=F(\Delta_k),$ where $\Delta_k=\Big(\frac{2k-n-2}{n}a,\frac{2k-n}{n}a\Big]$ for $1\leq k\leq n$, and 
	\begin{align*}
		\gkl=\begin{cases}
		&\frac{F_k\fl}{\norm{F_k\fl}}\hspace*{.7in}\text{if}~ F_k\fl\neq 0,\\
		&0 \hspace*{1in}\text{if}~ F_k\fl= 0,
		\end{cases}
    \end{align*}
    for $1\leq k\leq n$ and $1\leq l\leq L$ in such a way so that $\left\|\big[I-F((-a,a])\big]f_l\right\|<\epsilon$ for $1\leq l\leq L$ and $\gkl\in F_k\mathcal{H}\subseteq ~\text{Dom} (H_0)$.
	Let $P$ be the orthogonal projection onto the subspace generated by 
	$\{g_{kl}:~~1\leq k\leq n;~~1\leq l\leq L\}$. We need to prove only $(iii)$ and $(iv)$ since the first two are given in Proposition 3.1 of \cite{ChSi}. Since $F_k$ commutes with $H_0$, $(H_0\pm i)^{-1}\gkl=F_k(H_0\pm i)^{-1}f_l/\|F_kf_l\|\in F_k\mathcal{H}$. Thus by setting $\lambda_k=\frac{2k-n-1}{n}a$ one has 
	\begin{align*}
		\norm{\big\{(H_0\pm i)^{-1}-(\lambda_k\pm i)^{-1}\big\}\gkl}^2&=\int_{\Delta_k}\Big|(\lambda\pm i)^{-1}-(\lambda_k\pm i)^{-1} \Big|^2~\norm{F(d\lambda)\gkl}^2\\
		&\leq\int_{\Delta_k}\Big|\lambda-\lambda_k \Big|^2~\norm{F(d\lambda)\gkl}^2 \leq \left(\frac{a}{n}\right)^2.
		\end{align*}
		It is clear that $P^{\perp}(H_0\pm i)^{-1}\gkl\in F_k\hil$ and therefore we have for any $u\in\hil$ (using the Gram-Schmidt orthonormal set made out of $\{\gkl\}$ which are also in Dom ($H_0$)) 
		\begin{align*}
		\norm{P^\perp (H_0\pm i)^{-1}Pu}^2&=\left\|P^\perp (H_0\pm i)^{-1}\sum_{k=1}^{n}\sum_{l=1}^{L}\la u,\gkl\ra \gkl\right\|^2 =\left\|\sum_{k=1}^{n}\sum_{l=1}^{L}\la u,\gkl\ra P^\perp (H_0\pm i)^{-1}\gkl\right\|^2\\
		&=\sum_{k=1}^{n}~\left\|\sum_{l=1}^{L}\la u,\gkl\ra P^\perp (H_0\pm i)^{-1}\gkl\right\|^2\\
		&=\sum_{k=1}^{n}~\left\|\sum_{l=1}^{L}\la u,\gkl\ra P^\perp \big((H_0\pm i)^{-1}-(\lambda_k\pm i)^{-1}\big)\gkl\right\|^2\\
		&\leq \sum_{k=1}^{n}~\left[\sum_{l=1}^{L}|\la u,\gkl\ra| \left\|P^\perp \big((H_0\pm i)^{-1}-(\lambda_k\pm i)^{-1}\big)\gkl\right\|\right]^2\\
		&\leq \left(\frac{a}{n}\right)^2L \sum_{k=1}^{n}~\left(\sum_{l=1}^{L}|\la u,\gkl\ra|\right)^2
		\leq \left(\frac{a}{n}\right)^2 L \norm{u}^2.
		\end{align*} 
		Thus, the Hilbert-Schmidt norm can be estimated to be 
		\begin{equation}\label{Redeq2}
		 \norm{
		 P^\perp (H_0\pm i)^{-1}P}_2\leq \sqrt{\dim(P)}\norm{P^\perp (H_0\pm i)^{-1}P}\leq \sqrt{nL}~\Big(\frac{a}{n} \Big) \sqrt{L}= L\Big(\frac{a}{\sqrt{n}} \Big).
		\end{equation}
		Moreover for $m=\pm 1$ the following identity
		\begin{align*}
		P^\perp U_0^{\pm 1} P =P^\perp \Big[e^{\pm i\phi}(i\mp H_0)(i\pm H_0)^{-1}\Big]P
		& =P^\perp \Big[e^{\pm i\phi}\big\{2i(i\pm H_0)^{-1}-I\big\}\Big]P\\
		& =2i~ e^{\pm i\phi}P^\perp \Big[(i\pm H_0)^{-1}\Big]P
		\end{align*}
		along with the above equation \eqref{Redeq2} implies that $\left\|P^\perp U_0^{\pm 1} P\right\|_2\leq 2|\pm 1| L\Big(\frac{a}{\sqrt{n}} \Big)$
		and finally principle of mathematical induction procedure leads to $\left\|P^\perp U_0^{m} P\right\|_2\leq 2|m| L\Big(\frac{a}{\sqrt{n}} \Big)$ for general $m$. 
		 The proof concludes by choosing n sufficiently large.
	\end{proof}
\begin{lma}\label{l2}
	Let $U$ and $U_0$ be two unitary operators in a separable infinite dimensional Hilbert space $\mathcal{H}$ such that $U-U_0\in\hils$ and let $A$ be the corresponding self-adjoint operator in $\hils$ such that $U=e^{iA}U_0$. Then  given $\epsilon>0$, there exists a projection $P$  of finite rank such that for any integer $m$ and for all $t$ with $|t|\leq T$,
	\begin{align*}
		&(i)\hspace*{.1in}\norm{P^\perp U_0^mP}_2<2|m| \epsilon,~~\norm{P^\perp A}_2< 2\epsilon,\hspace*{8.5cm}\\
		&(ii)\hspace*{.1in}\norm{P^\perp e^{itA}P}_2<2T e^{T\norm{A}}~\epsilon,~~\norm{P^\perp U^mP}_2<2|m|(e^{\norm{A}}+1)~\epsilon.
	\end{align*}
\end{lma}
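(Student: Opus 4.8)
The plan is to feed the leading eigenvectors of $A$ into Lemma \ref{l1} and then bootstrap all four estimates from the two facts that lemma supplies for the resulting projection $P$: the bound $\norm{P^\perp U_0^m P}_2 < 2|m|\epsilon$ of part $(iv)$, and the smallness of $P^\perp$ on the prescribed finite family of vectors from part $(i)$. The first estimate in $(i)$ is then literally Lemma \ref{l1}$(iv)$ and needs no further work, so the real content is to control $\norm{P^\perp A}_2$ and, after that, to propagate these estimates through the composite operators $e^{itA}$ and $U^m=(e^{iA}U_0)^m$ by repeatedly inserting the resolution of the identity $I=P+P^\perp$ between factors.

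For $\norm{P^\perp A}_2<2\epsilon$ I would use that $A\in\hils$ is compact and self-adjoint, hence has an orthonormal eigenbasis $\{e_j\}$ with real eigenvalues $\{a_j\}$ satisfying $\sum_j a_j^2=\norm{A}_2^2<\infty$. Computing the Hilbert--Schmidt norm in this basis gives $\norm{P^\perp A}_2^2=\sum_j a_j^2\,\norm{P^\perp e_j}^2$. The idea is a tail truncation: first fix $N$ so large that $\sum_{j>N}a_j^2$ is as small as desired, then apply Lemma \ref{l1} to the finite normalized family $\{e_1,\dots,e_N\}$ with a sufficiently small parameter, so that $\norm{P^\perp e_j}$ is tiny for $j\le N$ while the crude bound $\norm{P^\perp e_j}\le 1$ controls the tail. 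Splitting the sum at $N$ yields $\norm{P^\perp A}_2<2\epsilon$, and the very same application of Lemma \ref{l1} simultaneously secures $\norm{P^\perp U_0^m P}_2<2|m|\epsilon$.

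For the first estimate in $(ii)$, since $P^\perp P=0$ I would write $P^\perp e^{itA}P=P^\perp(e^{itA}-I)P$ and expand the exponential, obtaining $P^\perp e^{itA}P=\sum_{k\ge 1}\frac{(it)^k}{k!}P^\perp A^k P$. Factoring $P^\perp A^k P=(P^\perp A)(A^{k-1}P)$ and using $\norm{A^{k-1}}\le\norm{A}^{k-1}$ gives $\norm{P^\perp A^k P}_2\le\norm{P^\perp A}_2\,\norm{A}^{k-1}$; summing the series $\sum_{k\ge1}\frac{|t|^k}{k!}\norm{A}^{k-1}=\frac{1}{\norm{A}}\big(e^{|t|\norm{A}}-1\big)$ and applying the elementary inequality $e^x-1\le x e^x$ produces exactly the factor $Te^{T\norm{A}}$ for $|t|\le T$, so that $\norm{P^\perp e^{itA}P}_2\le\norm{P^\perp A}_2\,Te^{T\norm{A}}<2Te^{T\norm{A}}\epsilon$.

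The last estimate is where I expect the main bookkeeping obstacle. Recalling $U=e^{iA}U_0$, I would first treat $m=1$ by inserting $I=P+P^\perp$ between $e^{iA}$ and $U_0$:
\[
P^\perp U P=P^\perp e^{iA}P\,U_0 P+P^\perp e^{iA}P^\perp U_0 P,
\]
and bound the two terms using $\norm{U_0}=\norm{e^{iA}}=1$ by $\norm{P^\perp e^{iA}P}_2+\norm{P^\perp U_0 P}_2<2e^{\norm{A}}\epsilon+2\epsilon=2(e^{\norm{A}}+1)\epsilon$, taking $t=T=1$ in the previous step. For general $m\ge 1$ I would insert $I=P+P^\perp$ between $U$ and $U^{m-1}$ to obtain the telescoping recursion $\norm{P^\perp U^m P}_2\le\norm{P^\perp U P}_2+\norm{P^\perp U^{m-1}P}_2$, and iterate to reach $2|m|(e^{\norm{A}}+1)\epsilon$; negative powers are handled identically after replacing $U$ by $U^*=U_0^* e^{-iA}$. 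The only delicate point throughout is to choose the single parameter in the initial application of Lemma \ref{l1} small enough that all four estimates close with the common $\epsilon$, which is arranged by absorbing the finitely many fixed constants (the factor $\norm{A}_2$ in the tail estimate and $e^{\norm{A}}$ in the exponential estimate) at the outset.
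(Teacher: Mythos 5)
Your proposal is correct and follows essentially the same route as the paper: feed the leading eigenvectors of the canonical (spectral) decomposition of $A$ into Lemma \ref{l1} with a suitably shrunk parameter, control $\norm{P^\perp A}_2$ by a tail truncation, and then obtain the $U^m$ bounds by inserting $I=P+P^\perp$ and inducting on $m$. The only cosmetic difference is that you bound $\norm{P^\perp e^{itA}P}_2$ by summing the exponential series directly, whereas the paper derives the same $2Te^{T\norm{A}}\epsilon$ bound from a Gronwall-type integral inequality; both are valid.
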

\begin{proof}
Let $ A(\cdot)=\sum\limits_{l=1}^{\infty}\tl\la \cdot ,\fl\ra\fl$ be the canonical form of $A$ with $\sum\limits_{l=1}^{\infty} \tl^2<\infty.$ Next choose $L$ in such a way so that $\left\|A-A_L\right\|_2= \sqrt{\sum\limits_{l=L+1}^{\infty} \tl^2}<\epsilon$, where $A_L(\cdot)=\sum\limits_{l=1}^{L}\tl\la \cdot ,\fl\ra\fl$ and $\epsilon'=\min \big\{\epsilon, \frac{\epsilon}{\sum\limits_{l=1}^{L}|\tl|}\big\}>0$. Next, we apply Lemma~\ref{l1} with $H_0$ as the correponding self-adjoint operator associated with $U_0$ (see \eqref{Redeq1}), $\{f_1,f_2,\ldots,f_L\}$ and $\epsilon'$ in place of $\epsilon$. Hence we get a finite rank projection $P$ in $\mathcal{H}$ such that 
\begin{align*}
 \norm{P^\perp f_{_l}}<\epsilon'<\epsilon \quad \text{for} \quad 1\leq l\leq L \quad \text{and}\quad \norm{P^\perp U_0^m P}_2<2|m|\epsilon'<2|m|\epsilon \quad \text{for any integer}~ m.
\end{align*}
 Furthermore, 
\begin{align*}
	\norm{P^\perp A}_2&\leq\norm{P^\perp \left(A-A_L\right)}_2+\norm{P^\perp A_L}_2\leq \norm{A-A_L}_2+\norm{P^\perp A_L}_2\\
	&< \epsilon+ \left\|\sum_{l=1}^{L}\tl\la \cdot ,\fl\ra P^{\perp}\fl\right\|_2
	< \epsilon+ \epsilon' \left(\sum_{l=1}^L|\tl|\right) < 2\epsilon.
\end{align*}
For $(ii)$, by the same calculation as in page 831 of \cite{MoSi94}, it follows that
\begin{align}\label{Redeq3}
\nonumber \alpha(t)= \norm{P^\perp e^{itA} P}_2 =& \norm{P^\perp (e^{itA}-I)P}_2\\
\leq&\norm{A}\int_{0}^{t}\alpha(s)~ds+T\norm{P^\perp AP}_2\leq\norm{A}\int_{0}^{t}\alpha(s)~ds+2T\epsilon \quad \text{for} \quad |t|\leq T
\end{align}
solving this Gronwall-type inequality \eqref{Redeq3} leads to 
\begin{align*}
 \alpha(t) = \norm{P^\perp e^{itA} P}_2
 \leq  2T\epsilon ~e^{t\norm{A}}\leq 
 2Te^{T\norm{A}}\epsilon \quad \text{uniformly for t with}\quad |t|\leq T.
\end{align*}
Moreover by using $(i)$ (for $m=\pm 1$) and $(ii)$ (for $t=\pm 1$) we conclude  
\begin{align*}
\norm{P^\perp UP}_2=\norm{P^\perp e^{iA}U_0P}_2=\norm{P^\perp e^{iA}(P^\perp+P)U_0P}_2 < 2(1+e^{\norm{A}})~\epsilon
\end{align*}
and 
\begin{align*}
 \norm{P^\perp U^{-1}P}_2=\norm{P^\perp U_0^{-1}e^{-iA}P}_2=\norm{P^\perp U_0^{-1}(P+P^{\perp})e^{-iA}P}_2 < 2(1+e^{\norm{A}})~\epsilon.
\end{align*}
Finally mathematical induction procedure leads to
$\norm{P^\perp U^mP}_2<2|m|(e^{\norm{A}}+1)~\epsilon$ for general $m$. This completes the proof. 
\end{proof}
\begin{lma}\label{l3}
	Let $U$ and $U_0$ be two unitary operators in a separable infinite dimensional Hilbert space $\mathcal{H}$ such that $U-U_0\in\hils$ and let $A$ be the corresponding self-adjoint operator in $\hils$ such that $U=e^{iA}U_0$. Then for $\epsilon>0$ there exists a finite rank projection $P$ such that for any integers $m, k$ and $|s|\leq T$
	\begin{align*}
		&(i)\hspace*{.1in}\norm{P^\perp \left(e^{iA}-I\right)}_2<2\epsilon,~~\norm{\left(e^{isA}-e^{isA_P}\right)P}_2<2T\epsilon,\\
		&\hspace*{.27in}\bnorm{P^\perp(e^{iA}-iA-I)}_1<2\norm{A}_2\norm{A}^{-2}~(e^{\norm{A}}-\norm{A}-1)\epsilon,\\
		&(ii)\hspace*{.1in}\norm{\left(U_0^m-U_{0,P}^m\right)P}_2< 2|m|\epsilon,~~\norm{P\left(U^m-U_{P}^m\right)P}_2<2|m|\epsilon \left\{(|m|-1)e^{\|A\|}+(|m|+1)\right\},\\
		&(iii)\left|\operatorname{Tr}\left\{PU_P^m\left(e^{iA}-e^{iA_P}\right)U_0^k\right\}\right|<4 \epsilon^2 e^{\|A\|},
	\end{align*}
	where in the above $U_{0,P} =e^{i\phi}(i-PH_0P)(i+PH_0P)^{-1},$ $U_P=e^{(iPAP)}U_{0,P}$ and $A_P=PAP$.
\end{lma}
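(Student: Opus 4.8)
The plan is to derive every bound from the single fact, supplied by Lemma~\ref{l2} for the projection $P$ obtained there, that each off-diagonal block $P^\perp(\cdot)P$ is Hilbert--Schmidt small: $\norm{P^\perp A}_2<2\epsilon$, $\norm{P^\perp U_0^mP}_2<2|m|\epsilon$, $\norm{P^\perp e^{itA}P}_2<2Te^{T\norm{A}}\epsilon$ and $\norm{P^\perp U^mP}_2<2|m|(e^{\norm{A}}+1)\epsilon$. The recurring device is to represent each difference of functions of operators through a Duhamel/power-series formula, so that the integrand or summand literally exhibits one factor (for the $\mathcal B_2$ bounds) or two factors (for the $\mathcal B_1$ and trace bounds) of the form $P^\perp A$, $P^\perp e^{irA}P$, etc. Throughout I set $A_P=PAP$ and use that $A_P$, hence $e^{isA_P}$, $U_{0,P}$ and $U_P$, commute with $P$ and act trivially on $P^\perp\hil$, together with $(A-A_P)P=P^\perp AP$.

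For part~(i): from $e^{iA}-I=iA\int_0^1 e^{isA}\,ds$ and $\norm{e^{isA}}=1$ one reads off $\norm{P^\perp(e^{iA}-I)}_2\le\norm{P^\perp A}_2<2\epsilon$. For the second estimate I use $e^{isA}-e^{isA_P}=i\int_0^s e^{i(s-r)A}(A-A_P)e^{irA_P}\,dr$; multiplying on the right by $P$ turns the integrand into $e^{i(s-r)A}\,P^\perp AP\,e^{irA_P}$, so $\norm{(e^{isA}-e^{isA_P})P}_2\le|s|\,\norm{P^\perp AP}_2<2T\epsilon$ for $|s|\le T$. The trace-norm bound comes from $e^{iA}-iA-I=\sum_{n\ge2}(iA)^n/n!$ via $\norm{P^\perp A^n}_1\le\norm{P^\perp A}_2\norm{A^{\,n-1}}_2\le\norm{P^\perp A}_2\norm{A}^{\,n-2}\norm{A}_2$ and $\sum_{n\ge2}\norm{A}^{\,n-2}/n!=\norm{A}^{-2}(e^{\norm{A}}-\norm{A}-1)$.

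For part~(ii): the case $m=1$ follows from $U_0=e^{i\phi}(2i(i+H_0)^{-1}-I)$, $U_{0,P}=e^{i\phi}(2i(i+PH_0P)^{-1}-I)$, the resolvent identity and $(PH_0P-H_0)P=-P^\perp H_0P$, which give $(U_0-U_{0,P})P=-2ie^{i\phi}(i+H_0)^{-1}P^\perp H_0P\,(i+PH_0P)^{-1}P$; since the resolvents have norm $\le1$ and $\norm{P^\perp H_0P}_2<\epsilon$ by Lemma~\ref{l1}(ii), this is $<2\epsilon$. General $m$ follows from the telescoping $U_0^m-U_{0,P}^m=U_0^{m-1}(U_0-U_{0,P})+(U_0^{m-1}-U_{0,P}^{m-1})U_{0,P}$, multiplied by $P$, using that $U_{0,P}$ commutes with $P$; induction yields $2|m|\epsilon$, and $m<0$ is identical through adjoints. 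For $\norm{P(U^m-U_P^m)P}_2$ I write $U-U_P=e^{iA}(U_0-U_{0,P})+(e^{iA}-e^{iA_P})U_{0,P}$ (each compressed factor now $O(\epsilon)$), telescope $U^m-U_P^m$ over its $m$ factors, and insert $I=P+P^\perp$ at each junction with $U$; the diagonal pieces reproduce $\norm{P(U^{m-1}-U_P^{m-1})P}_2$ for the induction, while the $P^\perp$ pieces are governed by $\norm{P^\perp U^{m-1}P}_2<2(m-1)(e^{\norm{A}}+1)\epsilon$. The resulting recursion $c_m\le c_{m-1}+4+2(m-1)(e^{\norm{A}}+1)$ is dominated by $2|m|\{(|m|-1)e^{\norm{A}}+(|m|+1)\}$.

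Part~(iii) is where the real work lies, since one needs the quadratic rate $\epsilon^2$, not merely $\epsilon$; the idea is to arrange the (finite) trace so that \emph{two} Hilbert--Schmidt-small factors occur in each summand. Using cyclicity and the commutation of $U_{0,P},U_P,e^{iA_P}$ with $P$, I split $U_0^kP=PU_0^kP+P^\perp U_0^kP$ to reduce $\operatorname{Tr}\{PU_P^m(e^{iA}-e^{iA_P})U_0^k\}$ to $\operatorname{Tr}\{M\cdot P(e^{iA}-e^{iA_P})P\}$ with $M=PU_0^kPU_P^m=PMP$ and $\norm{M}\le1$, plus a remainder exposing $P^\perp U_0^kP$ (in which the purely diagonal cross term vanishes, because $P(e^{iA}-e^{iA_P})P$ ends in $P$ and meets a $P^\perp$ under the cyclic trace). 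The central estimate is $\norm{P(e^{iA}-e^{iA_P})P}_1<4e^{\norm{A}}\epsilon^2$, obtained from the Duhamel identity
\[
P(e^{iA}-e^{iA_P})P=i\int_0^1\bigl[Pe^{irA}P^\perp\bigr]\bigl[P^\perp A\,e^{i(1-r)A_P}P\bigr]\,dr,
\]
since $\norm{Pe^{irA}P^\perp}_2=\norm{P^\perp e^{-irA}P}_2<2e^{\norm{A}}\epsilon$ and $\norm{P^\perp A\,e^{i(1-r)A_P}P}_2\le\norm{P^\perp A}_2<2\epsilon$, and $\norm{XY}_1\le\norm{X}_2\norm{Y}_2$. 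This gives the asserted leading contribution $4e^{\norm{A}}\epsilon^2$; the off-diagonal remainder is handled the same way, pairing $\norm{P^\perp U_0^kP}_2$ with $\norm{Pe^{iA}P^\perp}_2$ to produce a further term of order $\epsilon^2$. The main obstacle is precisely this bookkeeping: a naive estimate of $e^{iA}-e^{iA_P}$ yields only one factor $P^\perp A$ and leaves an uncontrolled, dimension-dependent Hilbert--Schmidt cofactor, so one must exploit cyclicity and the $P$-commuting structure to force a \emph{second} off-diagonal factor out of the surrounding unitaries before any norm is taken.
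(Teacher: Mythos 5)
Your proposal is correct and, for parts (i) and (ii), follows essentially the same route as the paper: the same Duhamel/power-series representations exposing a factor $P^\perp A$ or $P^\perp H_0P$, the same resolvent identity giving $(U_0-U_{0,P})P=-2ie^{i\phi}(i+H_0)^{-1}[P^\perp H_0P](i+PH_0P)^{-1}P$, and the same telescoping with insertion of $I=P+P^\perp$ and induction on $m$. (Your recursion constant for $\norm{P(U^m-U_P^m)P}_2$ appears to omit the factor $2$ coming from $\norm{U-U_P}\le 2$ in front of $\norm{PU^{m-j-1}P^\perp}_2$; the corrected recursion sums exactly to the stated bound $2|m|\epsilon\{(|m|-1)e^{\norm{A}}+(|m|+1)\}$, so this is pure bookkeeping.) In part (iii) your argument is a rearrangement of the paper's --- both ultimately pair $\norm{P^\perp AP}_2$ with $\norm{Pe^{isA}P^\perp}_2$ under the cyclic trace to get the quadratic rate --- but your version is in fact the more complete one: the paper substitutes $\int_0^1 e^{isA}\,i(A-A_P)P\,e^{i(1-s)A_P}\,ds$ for $e^{iA}-e^{iA_P}$, and that integral equals $(e^{iA}-e^{iA_P})P$ only, silently discarding the block $(e^{iA}-e^{iA_P})P^\perp=(e^{iA}-I)P^\perp$; you isolate precisely this remainder and control it by pairing $\norm{Pe^{iA}P^\perp}_2$ with $\norm{P^\perp U_0^kP}_2$. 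The price, for you and (once the dropped block is restored) for the paper alike, is an additional contribution of order $|k|\,e^{\norm{A}}\epsilon^2$, so the constant in (iii) is $k$-dependent rather than the uniform $4e^{\norm{A}}$ stated in the lemma; this does not affect the application in Theorem 3.6, where $k$ ranges over a finite set determined by the degree of the fixed polynomial, but it is worth recording that your bound, not the lemma's literal one, is what the argument actually delivers.
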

\begin{rmrk}\label{Rem1}
	 Now observe that $P$ commutes with $(i\pm PH_0P)$, $(i\pm PH_0P)^{-1}$ and $PAP$ and hence $P$ commutes with $U_{0,P}$ and $U_{P}$. Thus $PU_{0,P}P$ and $PU_{P}P$ can be looked upon as unitary operators on the Hilbert space $P\hil$
\end{rmrk}
\noindent $\textbf{Proof of Lemma 3.4.:}$
Given $U_0$ and $A$ construct $H_0$ and $P$ as in Lemma~\ref{l2} respectively.
\vspace{0.1in}

\noindent $(i)$~First we note that 
\begin{align}
\nonumber &\norm{P^\perp (e^{iA}-I)}_2=\Bnorm{ \int_0^1 iP^\perp Ae^{isA}ds}_2\leq \norm{P^\perp A}_2<2\epsilon,
\end{align}
\begin{align}
\nonumber&\bnorm{(e^{isA}-e^{isA_P})P}_2=\Bnorm{\int_{0}^{1} e^{istA}is(A-A_P)Pe^{is(1-t)A_p}dt}_2\leq T\norm{P^\perp AP}_2<2T\epsilon,
\end{align}
and furthermore
\begin{align*}
\bnorm{P^\perp(e^{iA}-iA-I)}_1=&~\Bnorm{P^\perp A^2\big(\sum_{k=2}^{\infty}\frac{(iA)^{k-2}}{k!}\big)}_1\\
\leq&~ \norm{P^\perp A}_2\norm{A}_2\Bnorm{\sum_{k=2}^{\infty}\frac{(iA)^{k-2}}{k!}}\leq~ 2\norm{A}_2\norm{A}^{-2}~(e^{\norm{A}}-\norm{A}-1)\epsilon.
\end{align*}
$(ii)$ Now we set $U_{0}^{\#^m}=U_{0}^{\pm m}$ and $U_{0,P}^{\#^m}=U_{0,P}^{\pm m},~m\geq 1$. Thus by using Lemma~\ref{l1} $(ii)$, Remark~\ref{Rem1} and the identity 
\begin{equation*}
 \left(U_0^{\#}-U_{0,P}^{\#}\right)P=
 \mp 2ie^{\pm i\phi}(i\pm H_0)^{-1}\left[P^{\perp}H_0P\right](i\pm PH_0P)^{-1}P
\end{equation*}
we have 
\begin{align*}
&\Bnorm{(U_0^{\#^m}-U^{\#^m}_{0,P})P}_2=\Bnorm{\sum_{j=0}^{m-1}U_0^{\#^{m-j-1}}(U_0^{\#}-U_{0,P}^{\#})U^{\#^j}_{0,P}P}_2\\
&\leq 2\sum_{j=0}^{m-1}\Bnorm{U_0^{\#^{m-j-1}}(i\pm H_0)^{-1}\left[P^{\perp}H_0P\right](i\pm PH_0P)^{-1}U^{\#^j}_{0,P}P}_2\\
& \leq 2\sum_{j=0}^{m-1}\Bnorm{P^{\perp}H_0P}_2< 2|m|\epsilon.
\end{align*}
 
Now first we note that
\begin{align}\label{eqdiffesti}
\nonumber \norm{P(U-U_{P})P}_2=~&\bnorm{P(e^{iA}U_0-e^{PAP}U_{0,P})P}_2\\
\nonumber \leq~ &\bnorm{Pe^{iA}(U_0-U_{0,P})P}_2+\bnorm{P(e^{iA}-e^{iA_P})U_{0,P}P}_2\\
\leq~ &\bnorm{(U_0-U_{0,P})P}_2+\bnorm{P(e^{iA}-e^{iA_P})}_2 <~4\epsilon,
\end{align}
by using $(i)$, $(ii)$. Furthermore, since $P$ commutes with $U_P$, we have for $m\geq 1$
\begin{align*}
&\bnorm{P(U^{m}-U^{m}_{P})P}_2=\Bnorm{\sum_{j=0}^{m-1}PU^{{m-j-1}}(U-U_{P})U^{j}_{P}P}_2\\
&\leq \sum_{j=0}^{m-1}\left\{\Bnorm{PU^{^{m-j-1}}P^{\perp}(U-U_{P})U^{j}_{P}P}_2 + \Bnorm{PU^{^{m-j-1}}P(U-U_{P})PU^{j}_{P}P}_2\right\}\\
& \leq \sum_{j=0}^{m-1}\left\{2\Bnorm{PU^{^{m-j-1}}P^{\perp}}_2 + \Bnorm{P(U-U_{P})P}_2\right\}
<~2m\epsilon \left\{(m-1)e^{\|A\|}+(m+1)\right\},
\end{align*}
by using the above equation \ref{eqdiffesti} and Lemma~\ref{l2} $(ii)$. Finally the estimate for $m\leq -1$ follows by taking the adjoint.
\vspace{0.1in}

$(iii)$ Now by applying trace properties and using Lemma~\ref{l2} $(i)$, $(ii)$ we conclude that
\begin{align*}
& \Bigg|\operatorname{Tr}\Big\{PU_P^m\big(e^{iA}-e^{iA_P}\big)U_0^k\Big\}\Bigg|
= \Bigg|\operatorname{Tr}\Big[ PU_P^m \left(\int_{0}^{1}\left\{e^{isA}i(A-A_P)Pe^{i(1-s)A_P}\right\}ds\right)~U_0^k\Big]\Bigg|\\
& = \Bigg|\int_{0}^{1}\operatorname{Tr}\Big[PU_p^m e^{isA}P^\perp A Pe^{i(1-s)A_P}U_0^k\Big]~ds\Bigg|
=\Bigg|\int_{0}^{1}\operatorname{Tr}\Big[P^\perp A P~ e^{i(1-s)A_P}U_0^kPU_P^m ~Pe^{isA}P^\perp\Big]~ds\Bigg|\\
& \leq  \int_{0}^{1} \norm{P^\perp AP}_2\bnorm{Pe^{isA}P^\perp}_2~ds <~ 4 \epsilon^2 e^{\|A\|}.
\end{align*}

\begin{rmrk}\label{rm}
 We can reformulate the above set of lemmas by saying that there exists a sequence $\{P_n\}$ of finite rank projections such that for $m,k\in\mathbb{Z}$ and $|s|\leq T$,
 \begin{align*}
&(i)\hspace*{.1in}\norm{P_n^\perp H_0P_n}_2,\hspace*{.1in} \norm{P_n^\perp U_0^mP_n}_2,\hspace*{.1in} \norm{P_n^\perp U^mP_n}_2,\hspace*{.1in} \norm{P_n^\perp A}_2\hspace*{.1in}\longrightarrow 0~\text{as}~ n\longrightarrow \infty,\\
&(ii)\hspace*{.1in}\bnorm{P_n^\perp (e^{iA}-I)}_2,\hspace*{.1in} \bnorm{(U_0^m-U_{0,n}^m)P_n}_2, \hspace*{.1in} \bnorm{P_n(U^m-U_{n}^m)P_n}_2\hspace*{.1in}\longrightarrow 0~\text{as}~ n\longrightarrow\infty,\\
&(iii)\hspace*{.1in}\bnorm{(e^{isA}-e^{isA_{P_n}})P_n}_2,\hspace*{.1in} \bnorm{P_n^\perp e^{isA}P_n}_2,\hspace*{.1in} \left|\operatorname{Tr}\left\{P_nU_{n}^m\left(e^{iA}-e^{iA_{P_n}}\right)U_0^k\right\}\right|\hspace*{.1in}\longrightarrow 0~\text{as}~ n\longrightarrow \infty,\\
&(iv)\hspace*{.1in}\bnorm{P_n^\perp(e^{iA}-iA-I)}_1\longrightarrow0~\text{as}~ n\longrightarrow \infty,
\end{align*}
where $A_n=P_nAP_n$, $U_{0,n} =e^{i\phi}(i-P_nH_0P_n)(i+P_nH_0P_n)^{-1}$,  $U_n=e^{(iA_n)}U_{0,n}$ and $U_{s,n}=e^{(isA_n)}U_{0,n}$.
\end{rmrk}

The next theorem show how the above set of lemmas can be used to reduce the relevant problem into a finite dimensional one.

\begin{thm}\label{th3}
	Let $U$ and $U_0$ be two unitary operators in a separable Hilbert space $\hil$ such that $U-U_0\in\hils$ and let $A\in\hils$ be the corresponding self-adjoint operator as in Theorem \ref{th1} such that $U=e^{iA}U_0$. Let $U_s=e^{isA}U_0,~s\in\R$ and $p(\cdot)$ be any trigonometric polynomial on $\cir$ with complex coefficients. Then there exists a sequence  $\{P_n\}$ of finite rank projections in $\mathcal{H}$ such that
	\begin{align}\label{eqapp}
	\nonumber &\operatorname{Tr}\Big\{p(U)-p(U_0)-\at{\dds}{s=0}p(U_s) \Big\}\\
	&\hspace*{1in}=\lim_{n\to \infty}\operatorname{Tr}~\left[P_n\Big\{p(U_n)-p(U_{0,n})-\at{\dds}{s=0}p(U_{s,n}) \Big\}P_n\right],
\end{align}
where $A_n=P_nAP_n$, $U_{0,n} =e^{i\phi}(i-P_nH_0P_n)(i+P_nH_0P_n)^{-1}$,  $U_n=e^{(iA_n)}U_{0,n}$ and $U_{s,n}=e^{(isA_n)}U_{0,n}$.
\end{thm}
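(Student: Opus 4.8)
The plan is to reduce to monomials, push both sides to the same ``cyclic'' form used in the proof of Theorem~\ref{th2}, and then let the estimates of Remark~\ref{rm} kill the error terms one factor at a time. I would take the sequence $\{P_n\}$ of finite-rank projections furnished by Remark~\ref{rm}, whose listed properties are precisely the bounds I will invoke. By linearity of the trace, of the G\^ateaux derivative $\at{\dds p(U_s)}{s=0}$, and of the right-hand side of \eqref{eqapp} in $p$, it suffices to prove the identity for each monomial $p(\lambda)=\lambda^r$, $r\in\Int$; the case $r=0$ is trivial and $r\le-1$ follows from $r\ge1$ by passing to adjoints, so I fix $r\ge1$. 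Using Theorem~\ref{th2}(i) and the fundamental theorem of calculus I would write
\[
Q:=U^r-U_0^r-\at{\dds p(U_s)}{s=0}
=\int_0^1\sum_{k=0}^{r-1}\big[U_s^{r-k-1}(iA)U_s^{k+1}-U_0^{r-k-1}(iA)U_0^{k+1}\big]\,ds .
\]
After adding and subtracting $U_0^{r-k-1}(iA)U_s^{k+1}$, each summand splits into two terms, each carrying the Hilbert--Schmidt factor $iA$ together with a factor of the form $U_s^{j}-U_0^{j}\in\mathcal{B}_2$ (since $U_s-U_0=(e^{isA}-I)U_0\in\mathcal{B}_2$); hence $Q\in\mathcal{B}_1(\hil)$. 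Using cyclicity only in the licit forms $\operatorname{Tr}(XY)=\operatorname{Tr}(YX)$ for $X,Y\in\mathcal{B}_2$ and for $X\in\mathcal{B}_1$, $Y$ bounded, the same computation as in the proof of Theorem~\ref{th2}(ii) would give
\[
\operatorname{Tr}(Q)=r\int_0^1\operatorname{Tr}\big\{iA\,(U_s^r-U_0^r)\big\}\,ds .
\]

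On the finite-dimensional space $P_n\hil$ the operators $U_{0,n},U_{s,n},A_n$ commute with $P_n$ (Remark~\ref{Rem1}), so Theorem~\ref{th2}(ii) applies verbatim to the pair $(U_{0,n},U_{s,n})$ and yields
\[
\operatorname{Tr}\Big[P_n\Big\{p(U_n)-p(U_{0,n})-\at{\dds}{s=0}p(U_{s,n})\Big\}P_n\Big]
=r\int_0^1\operatorname{Tr}\big\{iA_n\,(U_{s,n}^r-U_{0,n}^r)\big\}\,ds .
\]
Subtracting, it then remains to show that
\[
\Delta_n(s):=\operatorname{Tr}\big\{iA(U_s^r-U_0^r)\big\}-\operatorname{Tr}\big\{iA_n(U_{s,n}^r-U_{0,n}^r)\big\}\longrightarrow 0
\]
as $n\to\infty$, uniformly for $s\in[0,1]$; \eqref{eqapp} would then follow by dominated convergence in the $s$-integral (all the Hilbert--Schmidt norms below are uniformly bounded for $|s|\le1$).

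To estimate $\Delta_n(s)$ I would insert $\operatorname{Tr}\{iA_n(U_s^r-U_0^r)\}$ and split
\[
\Delta_n(s)=\operatorname{Tr}\big\{i(A-A_n)(U_s^r-U_0^r)\big\}
+\operatorname{Tr}\big\{iA_n\big[(U_s^r-U_{s,n}^r)-(U_0^r-U_{0,n}^r)\big]\big\}.
\]
The first trace is at most $\norm{A-A_n}_2\norm{U_s^r-U_0^r}_2$, and $\norm{A-A_n}_2\le2\norm{P_n^\perp A}_2\to0$ by Remark~\ref{rm}(i), while $\norm{U_s^r-U_0^r}_2$ stays bounded. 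For the second trace I would use $\norm{A_n}_2\le\norm{A}_2$ and the commutation of $U_{s,n},U_{0,n}$ with $P_n$ to bound it by $\norm{A}_2$ times $\norm{(U_s^r-U_{s,n}^r)P_n}_2+\norm{(U_0^r-U_{0,n}^r)P_n}_2$; expanding $U_s^r-U_{s,n}^r$ telescopically and writing $U_s-U_{s,n}=e^{isA}(U_0-U_{0,n})+(e^{isA}-e^{isA_n})U_{0,n}$, each piece is controlled by $\norm{(U_0^m-U_{0,n}^m)P_n}_2$ and $\norm{(e^{isA}-e^{isA_n})P_n}_2$ from Remark~\ref{rm}(ii),(iii), all tending to $0$ uniformly for $|s|\le1$.

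The hard part is not any single estimate but the trace-class bookkeeping: because the zeroth- and first-order pieces $U^r-U_0^r$ and $\at{\dds p(U_s)}{s=0}$ are only Hilbert--Schmidt and \emph{not} trace class, the trace may never be distributed over the individual monomial terms, and at every step one must keep the full second-order combination grouped so that each operator whose trace is taken lies in $\mathcal{B}_1(\hil)$ and each cyclic permutation is of the permitted type $\mathcal{B}_2\!\cdot\!\mathcal{B}_2$ or $\mathcal{B}_1\!\cdot\!\text{bounded}$. The estimates of Remark~\ref{rm} are calibrated exactly for this: each is the Hilbert--Schmidt (or, in (iv), trace) norm of precisely the grouped error that appears, so that pairing a vanishing factor with a uniformly bounded one through the Schatten--H\"older inequality $\norm{XY}_1\le\norm{X}_2\norm{Y}_2$ makes every error term vanish. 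In particular, had I compared the compressions $U_n,U_{0,n}$ with $U,U_0$ directly rather than through the reduced form above, the surviving cross term would be exactly the trace $\operatorname{Tr}\{P_nU_n^m(e^{iA}-e^{iA_n})U_0^k\}$ of Remark~\ref{rm}(iii), which explains why that particular estimate is recorded.
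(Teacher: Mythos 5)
Your proposal is correct, but it is organized quite differently from the paper's proof, and the comparison is instructive. The paper never passes to a scalar identity in infinite dimensions: it keeps everything at the operator level, writing
\[
p(U)-p(U_0)-\at{\dds p(U_s)}{s=0}=\sum_{j=0}^{r-1}\Big[U^{r-j-1}\big(e^{iA}-iA-I\big)U_0^{j+1}+\big(U^{r-j-1}-U_0^{r-j-1}\big)(iA)\,U_0^{j+1}\Big],
\]
where trace--classness comes from the quadratic remainder $e^{iA}-iA-I\in\boh$ and from products of two Hilbert--Schmidt factors; it then forms the same decomposition for the compressed operators and estimates the difference of the two groups term by term, which is why the full battery of Lemma~3.4 is needed, including the trace-norm bound on $P_n^\perp(e^{iA}-iA-I)$ and the special estimate $\big|\operatorname{Tr}\{P_nU_n^m(e^{iA}-e^{iA_n})U_0^k\}\big|$. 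You instead first reduce \emph{both} sides to the form $r\int_0^1\operatorname{Tr}\{iA(U_s^r-U_0^r)\}\,ds$ (and its compressed analogue), which is exactly the intermediate identity the paper derives only in the finite-dimensional proof of Theorem~2.2(ii) ``since the trace is a finite sum''; your extension of it to infinite dimensions is legitimate because, as you note, after adding and subtracting $U_0^{r-k-1}(iA)U_s^{k+1}$ every trace taken is of a product of two Hilbert--Schmidt factors and every cyclic permutation is of an admissible type, and the terms recombine to $r\operatorname{Tr}\{iA(U_s^r-U_0^r)\}$ without ever splitting off a non-trace-class piece. What your route buys is economy: the comparison of the two reduced forms needs only $\norm{P_n^\perp A}_2$, $\norm{(U_0^m-U_{0,n}^m)P_n}_2$ and the uniform-in-$s$ bound on $\norm{(e^{isA}-e^{isA_n})P_n}_2$, and it dispenses with Remark~3.6(iv) and the cross-term trace estimate entirely (your closing remark correctly identifies why the paper needs them). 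What the paper's route buys is that the operator-level decomposition exhibits the trace-class representative of $p(U)-p(U_0)-\at{\dds p(U_s)}{s=0}$ explicitly, which is reused in Section~4. Two small points to tighten: the interchange of $\operatorname{Tr}$ with $\int_0^1 ds$ should be justified by noting that the grouped integrand is continuous in trace norm on $[0,1]$; and the reduction of $r\le-1$ to $r\ge1$ ``by adjoints'' is slightly glib because the compressions $U_{0,n}$ are built from the Cayley transform of $P_nH_0P_n$ and do not obviously transform into the corresponding compressions for the pair $(U_0^*,U^*)$ --- but this is harmless, since your reduction to $r\int_0^1\operatorname{Tr}\{iA(U_s^r-U_0^r)\}\,ds$ runs verbatim for negative $r$ using the second line of \eqref{derivexpeq} and the estimates of Remark~3.6 are stated for all integers $m$.
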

\begin{proof}
		It will be sufficient to prove the theorem for $p(\lambda)=\lambda^r,r\in\mathbb{Z}, \lambda\in\cir$. Note that for $r=0$, both sides of \eqref{eqapp} are identically zero. First we prove for $r\geq 1$. Using the sequence $\{P_n\}$ of finite rank projections as obtained in Lemma~\ref{l2} and Lemma~\ref{l3} and using an expression
        similar to \eqref{derivexpeq} in $\mathcal{B}(\mathcal{H})$, we have that
        
		\begin{align}\label{eq1}
		\nonumber&\operatorname{Tr}\Big\{\Big[p(U)-p(U_0)-\at{\dds}{s=0}p(U_s)\Big]- P_n\Big[p(U_n)-p(U_{0,n})-\at{\dds}{s=0}p(U_{s,n})\Big]P_n\Big\}\\\
		\nonumber&=\operatorname{Tr}\Bigg\{\Big[U^r-U_0^r- \sum_{j=0}^{r-1} U_{0}^{r-j-1}(iA)U_0^{j+1}\Big]- P_n\Big[U_n^r-U_{0,n}^r- \sum_{j=0}^{r-1} U_{0,n}^{r-j-1}(iA_n)U_{0,n}^{j+1}\Big]P_n\Bigg\}\\
		\nonumber&=\operatorname{Tr}\Bigg\{\Big[\sum_{j=0}^{r-1} U^{r-j-1}(U-U_0)U_0^{j} - \sum_{j=0}^{r-1} U_{0}^{r-j-1}(iA)U_0^{j+1}\Big]\\ \nonumber&\hspace*{3cm}-P_n\Big[\sum_{j=0}^{r-1} U_{n}^{r-j-1}P_n(U_n-U_{0,n})P_nU_{0,n}^j- \sum_{j=0}^{r-1} U_{0,n}^{r-j-1}(iA_n)U_{0,n}^{j+1}\Big]P_n\Bigg\}\\
		\nonumber&=\operatorname{Tr}\Bigg\{\Big[\sum_{j=0}^{r-1} U^{r-j-1}(e^{iA}-I)U_0^{j+1} - \sum_{j=0}^{r-1} U_{0}^{r-j-1}(iA)U_0^{j+1}\Big]\\ 
		\nonumber&\hspace*{3cm}-P_n\Big[\sum_{j=0}^{r-1} U_{n}^{r-j-1}P_n(e^{iA_n}-I)P_nU_{0,n}^{j+1}- \sum_{j=0}^{r-1} U_{0,n}^{r-j-1}(iA_n)U_{0,n}^{j+1}\Big]P_n\Bigg\}\\
		\nonumber&=\operatorname{Tr}\Bigg\{\sum_{j=0}^{r-1} \Big[ U^{r-j-1}(e^{iA}-iA-I)U_0^{j+1}+(U^{r-j-1}-U_{0}^{r-j-1})(iA)U_0^{j+1}\Big]\\ 
		\nonumber&\hspace*{1cm}-P_n\left(\sum_{j=0}^{r-1} \Big[ U_{n}^{r-j-1}P_n(e^{iA_n}-iA_n-I)P_nU_{0,n}^{j+1}+ ( U_{n}^{r-j-1}-U_{0,n}^{r-j-1})(iA_n)U_{0,n}^{j+1}\Big]\right)P_n\Bigg\}\\
		\nonumber&=\operatorname{Tr}\Bigg\{\sum_{j=0}^{r-1}\Big[ U^{r-j-1}(e^{iA}-iA-I)U_0^{j+1}- P_n U_{n}^{r-j-1}P_n(e^{iA_n}-iA_n-I)P_nU_{0,n}^{j+1}P_n\Big]\\ 
		&\hspace*{1cm}+\sum_{j=0}^{r-1}\Big[(U^{r-j-1}-U_{0}^{r-j-1})(iA)U_0^{j+1}-P_n( U_{n}^{r-j-1}-U_{0,n}^{r-j-1})P_n(iA_n)U_{0,n}^{j+1}P_n\Big]\Bigg\}.
		\end{align}
		Using the results obtained in Lemma~\ref{l2} and Lemma~\ref{l3}, the first term of the expression \eqref{eq1} leads to
		\begin{align}\label{eq2}
		\nonumber&\Bigg|\operatorname{Tr}\Bigg\{\sum_{j=0}^{r-1}\Big[ U^{r-j-1}(e^{iA}-iA-I)U_0^{j+1}-P_n U_{n}^{r-j-1}P_n(e^{iA_n}-iA_n-I)P_nU_{0,n}^{j+1}P_n\Big]\Bigg\}\Bigg|\\
		\nonumber=~&\Bigg|\operatorname{Tr}\Bigg\{\sum_{j=0}^{r-1}\Big[(U^{r-j-1}-U_{n}^{r-j-1})P_n(e^{iA}-iA-I)U_0^{j+1}+U^{r-j-1}P_n^\perp (e^{iA}-iA-I)U_0^{j+1}  \\
		\nonumber&\hspace*{3cm}+ U_{n}^{r-j-1}P_n(e^{iA}-iA-I-e^{iA_n}+iA_n+I)U_0^{j+1}\\
		\nonumber&\hspace*{4cm}+ U_{n}^{r-j-1}P_n(e^{iA_n}-iA_n-I)P_n(U_0^{j+1}-U_{0,n}^{j+1})\Big]\Bigg\}\Bigg|\\
		\nonumber=~&\Bigg|\operatorname{Tr}\Bigg\{\sum_{j=0}^{r-1}\Big[P_n(U^{r-j-1}-U_{n}^{r-j-1})P_n(e^{iA}-iA-I)U_0^{j+1}+P_n^\perp U^{r-j-1}P_n(e^{iA}-iA-I)U_0^{j+1}\\
		\nonumber&\hspace*{.5in}+U^{r-j-1}P_n^\perp (e^{iA}-iA-I)U_0^{j+1}+U_{n}^{r-j-1}P_n(e^{iA}-iA-e^{iA_n}+iA_n+)U_0^{j+1} \\
		\nonumber&\hspace*{4cm}+ U_{n}^{r-j-1}P_n(e^{iA_n}-iA_n-I)P_n(U_0^{j+1}-U_{0,n}^{j+1})\Big]\Bigg\}\Bigg|\\
		\nonumber \leq~&\sum_{j=0}^{r-1}\Big\{\bnorm{P_n(U^{r-j-1}-U_{n}^{r-j-1})P_n}_2~\bnorm{(e^{iA}-iA-I)}_2+\bnorm{P_n^\perp U^{r-j-1}P_n}_2~\bnorm{e^{iA}-iA-I}_2\\
		\nonumber&\hspace*{.5in}+~\bnorm{P_n^\perp (e^{iA}-iA-I)}_1~+\Big|\operatorname{Tr}\big(P_n U_{n}^{r-j-1}P_n(e^{iA}-iA-e^{iA_n}+iA_n)U_0^{j+1}P_n \big)\Big| \\
		\nonumber&\hspace*{4cm}+ \bnorm{(e^{iA_n}-iA_n-I)}_2\bnorm{P_n(U_0^{j+1}-U_{0,n}^{j+1})}_2\Big\}\\
		\nonumber\leq~&\big(e^{\norm{A}}-\norm{A}-1\big)\norm{A}^{-1}\sum_{j=0}^{r-1}\Big\{\bnorm{P_n(U^{r-j-1}-U_{n}^{r-j-1})P_n}_2+\bnorm{P_n^\perp U^{r-j-1}P_n}_2\Big\}\\
		\nonumber&+r\bnorm{P_n^\perp (e^{iA}-iA-I)}_1+\sum_{j=0}^{r-1}\Big|\operatorname{Tr}\big(P_n U_{n}^{r-j-1}P_n(e^{iA}-e^{iA_n})U_0^{j+1}P_n \big)\Big|\\
		\nonumber~&+\sum_{j=0}^{r-1}\bnorm{P_n U_{n}^{r-j-1}P_nAP_n^\perp U_0^{j+1}P_n}_1
		+ \big(e^{\norm{A}}-\norm{A}-1\big)\norm{A}^{-1}\sum_{j=0}^{r-1}\bnorm{P_n(U_0^{j+1}-U_{0,n}^{j+1})}_2\\
		\nonumber\leq~&\big(e^{\norm{A}}-\norm{A}-1\big)\norm{A}^{-1}\sum_{j=0}^{r-1}\Big\{\bnorm{P_n(U^{r-j-1}-U_{n}^{r-j-1})P_n}_2+\bnorm{P_n^\perp U^{r-j-1}P_n}_2\\
		\nonumber~&\hspace*{1.5in}+\bnorm{P_n(U_0^{j+1}-U_{0,n}^{j+1})}_2\Big\}+r\bnorm{P_n^\perp (e^{iA}-iA-I)}_1\\
		~&+\sum_{j=0}^{r-1}\Big|\operatorname{Tr}\big(P_n U_{n}^{r-j-1}P_n(e^{iA}-e^{iA_n})U_0^{j+1}P_n \big)\Big|+\bnorm{P_nAP_n^\perp}_2\sum_{j=0}^{r-1} \bnorm{P_n^\perp U_0^{j+1}P_n}_2,
		\end{align}
		and the estimate of the second term of the right hand side of (\ref{eq1}) is as follows
	\begin{align}\label{equuu}
	\nonumber&\Bigg|\operatorname{Tr}\Bigg(\sum_{j=0}^{r-1}\Big[(U^{r-j-1}-U_{0}^{r-j-1})AU_0^{j+1}-P_n( U_{n}^{r-j-1}-U_{0,n}^{r-j-1})P_nA_nU_{0,n}^{j+1}\Big]\Bigg)\Bigg|\\
	\nonumber~=&\Bigg|\operatorname{Tr}\Bigg(\sum_{j=0}^{r-1}\Big[\big\{\big(U^{r-j-1}-U_{0}^{r-j-1}\big)-\big( U_{n}^{r-j-1}-U_{0,n}^{r-j-1} \big)\big\}P_nAU_0^{j+1}\\
	\nonumber&+\big(U^{r-j-1}-U_{0}^{r-j-1}\big)P_n^\perp AU_0^{j+1}+\big( U_{n}^{r-j-1}-U_{0,n}^{r-j-1}\big)P_n(A-A_n)U_{0}^{j+1}\\
	\nonumber&+(U_{n}^{r-j-1}-U_{0,n}^{r-j-1})P_nA_nP_n(U_{0}^{j+1}-U_{0,n}^{j+1})\Big]\Bigg)\Bigg|\\
	\nonumber\leq \nonumber~&\sum_{j=0}^{r-1}\Bigg\{\Big(\bnorm{\big(U^{r-j-1}-U_{n}^{r-j-1}\big)P_n}_2+\bnorm{\big(U_{0}^{r-j-1}-U_{0,n}^{r-j-1}\big)P_n}_2 \Big)\bnorm{P_n AU_0^{j+1}}_2\\ \nonumber&+\bnorm{U^{r-j-1}-U_{0}^{r-j-1}}_2\bnorm{P_n^\perp AU_0^{j+1}}_2+\bnorm{(U_{n}^{r-j-1}-U_{0,n}^{r-j-1})P_n}_2\bnorm{P_n AP_n^\perp}_2\\
	& \nonumber \hspace{1.8in}+2\norm{A}_2\bnorm{P_n(U_{0}^{j+1}-U_{0,n}^{j+1})}_2\Bigg\}\\
	\nonumber\leq \nonumber~&\bnorm{ A}_2~\sum_{j=0}^{r-1}\Big\{\bnorm{\big(U^{r-j-1}-U_{n}^{r-j-1}\big)P_n}_2+\bnorm{\big(U_{0}^{r-j-1}-U_{0,n}^{r-j-1}\big)P_n}_2\\
	&+2 \bnorm{P_n(U_{0}^{j+1}-U_{0,n}^{j+1})}_2\Big\}+ \frac{r(r-1)}{2}\norm{A}_2\bnorm{P_n^\perp A}_2.
	\end{align}
		Now using all estimates listed in Remark \eqref{rm} we conclude that  the right hand sides of \eqref{eq2} and \eqref{equuu} tend to zero as $n$ approaches to infinity. Hence from (\ref{eq1}) we deduce the desire approximation (\ref{eqapp}). On the other hand for  $p(\lambda)=\lambda^{r},~r\leq -1$, we have 
	\begin{align}\label{12}
	\nonumber&\operatorname{Tr}\Big\{\Big[p(U)-p(U_0)-\at{\dds}{s=0}p(U_s)\Big]- P_n\Big[p(U_n)-p(U_{0,n})-\at{\dds}{s=0}p(U_{s,n})\Big]P_n\Big\}\\\
	\nonumber&=\operatorname{Tr}\Bigg\{\sum_{j=0}^{|r|-1}\Big( {U^*}^{|r|-j-1}U_0^*(e^{-iA}-1){U_0^*}^{j}+{U_{0}^*}^{|r|-j-1}U_0^*(iA){U_0^*}^{j}\Big)\\
	\nonumber&\hspace*{1in}- P_n\sum_{j=0}^{|r|-1}\Big({U_n^*}^{|r|-j-1}{U^*_{0,n}}(e^{-iA_n}-1){U^*_{0,n}}^{j} + {U_{0,n}^*}^{|r|-j-1}{U_{0,n}^*}(iA_n){U_{0,n}^*}^{j}\Big)P_n\Bigg\}\\
	\nonumber&=\operatorname{Tr}\Bigg\{\sum_{j=0}^{|r|-1}\Big[ {U^*}^{|r|-j-1}U_0^*(e^{-iA}+iA-I){U_0^*}^{j}- P_n {U_{n}^*}^{|r|-j-1}{U_{0,n}^*}P_n(e^{-iA_n}+iA_n-I)P_n{U_{0,n}^*}^{j}P_n\Big]\\ 
	&~-\sum_{j=0}^{|r|-1}\Big[({U^*}^{|r|-j-1}-{U_{0}^*}^{|r|-j-1}){U_0^*}(iA){U_0^*}^{j}-P_n( {U_{n}^*}^{|r|-j-1}-{U_{0,n}^*}^{|r|-j-1})U_{0,n}^*P_n(iA_n){U_{0,n}^*}^{j}P_n\Big]\Bigg\}.
	\end{align} 
	Similarly as above with an appropriate rearrangement and using Remark \ref{rm}, one can show that the right-hand side of (\ref{12}) approaches to zero as $n$ tends to infinity. This completes the proof. 
	\end{proof}
	\section{Existence of sift function}
	In this section, we derive the trace formula corresponding to the pair $(U,U_0)$. The following theorem is one of the main result in this section. 
\begin{thm}\label{th4}
		Let $U$ and $U_0$ be two unitary operators in a separable Hilbert space $\hil$ such that $U-U_0\in\hils$ and let $A\in\hils$ be the corresponding self-adjoint operator as in Theorem \ref{th1} such that $U=e^{iA}U_0$. Denote $U_s=e^{isA}U_0,~s\in\R$. Then for any  trigonometric polynomial $p(\cdot)$ on $\cir$ with complex coefficients, $\left\{p(U)-p(U_0)-\at{\dfrac{d}{ds}p(U_s)}{s=0}\right\}\in\mathcal{B}_1(\mathcal{H})$ and there exists a $L^1([0,2\pi])$- function $\eta$ (unique upto an additive constant) such that
		\begin{align*}
		\operatorname{Tr}\Big\{p(U)-p(U_0)-\at{\dfrac{d}{ds}p(U_s)}{s=0}\Big\}=\int_{0}^{2\pi} \dfrac{d^2}{dt^2} \big\{p (e^{it})\big\} \eta(t) dt.
		\end{align*}
		Moreover, $\norm{\eta}_{L^1([0,2\pi])}\leq\dfrac{\pi}{2}\norm{A}_2^2.$
	\end{thm}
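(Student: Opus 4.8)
The plan is to combine the finite-dimensional trace formula of Theorem \ref{th2} with the finite-rank approximation of Theorem \ref{th3} and pass to the limit. First I would fix a trigonometric polynomial $p$ and invoke Theorem \ref{th3} to obtain a sequence $\{P_n\}$ of finite-rank projections with
\[
\operatorname{Tr}\Big\{p(U)-p(U_0)-\at{\dds}{s=0}p(U_s)\Big\}
=\lim_{n\to\infty}\operatorname{Tr}\Big[P_n\big\{p(U_n)-p(U_{0,n})-\at{\dds}{s=0}p(U_{s,n})\big\}P_n\Big].
\]
Because $P_n\hil$ is finite-dimensional and $PU_{0,n}P$, $PU_nP$ are genuine unitaries on it (Remark \ref{Rem1}), Theorem \ref{th2}(ii) applies in $P_n\hil$ and produces, for each $n$, a function $\eta_n\in L^1([0,2\pi])$ with $\eta_n(t)=\int_0^1\operatorname{Tr}\{A_n[E_{0,n}(t)-E_{s,n}(t)]\}\,ds$, together with the normalized version $\eta_{o,n}$ satisfying
\[
\operatorname{Tr}\Big[P_n\big\{p(U_n)-p(U_{0,n})-\at{\dds}{s=0}p(U_{s,n})\big\}P_n\Big]
=\int_0^{2\pi}\frac{d^2}{dt^2}\big\{p(e^{it})\big\}\,\eta_{o,n}(t)\,dt,
\qquad
\norm{\eta_{o,n}}_{L^1}\le\tfrac{\pi}{2}\norm{A_n}_2^2.
\]

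Next I would extract a weak-$*$ limit. Since $\norm{A_n}_2=\norm{P_nAP_n}_2\le\norm{A}_2$, the bound $\norm{\eta_{o,n}}_{L^1}\le\tfrac{\pi}{2}\norm{A}_2^2$ is uniform in $n$, so $\{\eta_{o,n}\}$ is a bounded sequence in $L^1([0,2\pi])=\big(C([0,2\pi])\big)^*$-adjacent dual; more precisely I would view it inside $(L^\infty)^*$ or, better, pass to the reflexive framework by testing against the finite-dimensional trigonometric space. The cleanest route is to note that for each fixed integer $m$ the scalar sequence $\int_0^{2\pi}e^{imt}\eta_{o,n}(t)\,dt$ converges: indeed this integral equals (up to the factor $(im)^2$) the left-hand trace for $p(\lambda)=\lambda^m$, which converges by Theorem \ref{th3}. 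By the Banach–Alaoglu theorem applied in $L^1\subset(L^\infty)^*$ — or, to stay within $L^1$, by using the uniform $L^1$ bound together with the Fourier-coefficient convergence and the standard weak-compactness of balls in the dual of a separable space — I would obtain a subsequence and a limit functional represented by some $\eta\in L^1([0,2\pi])$ with $\norm{\eta}_{L^1}\le\tfrac{\pi}{2}\norm{A}_2^2$ and with the correct Fourier coefficients. Passing to the limit in the displayed finite-dimensional identity then yields
\[
\operatorname{Tr}\Big\{p(U)-p(U_0)-\at{\dds}{s=0}p(U_s)\Big\}
=\int_0^{2\pi}\frac{d^2}{dt^2}\big\{p(e^{it})\big\}\,\eta(t)\,dt
\]
for every trigonometric polynomial $p$, which is the desired formula; uniqueness up to an additive constant follows because the distributional second derivatives of the $e^{imt}$ span all nonconstant trigonometric directions.

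The trace-class assertion $\{p(U)-p(U_0)-\at{\dds}{s=0}p(U_s)\}\in\mathcal{B}_1(\hil)$ I would establish separately and beforehand, since the trace in the statement must make sense. For $p(\lambda)=\lambda^r$ with $r\ge1$ I would rewrite the expression, exactly as in the first display of the proof of Theorem \ref{th3}, as
\[
\sum_{j=0}^{r-1}\Big[U^{r-j-1}(e^{iA}-iA-I)U_0^{j+1}+(U^{r-j-1}-U_0^{r-j-1})(iA)U_0^{j+1}\Big];
\]
the first summand lies in $\mathcal{B}_1$ because $e^{iA}-iA-I=A^2\sum_{k\ge2}(iA)^{k-2}/k!\in\mathcal{B}_1$ (as $A\in\mathcal{B}_2$ and the operator series is bounded), and the second lies in $\mathcal{B}_1$ because $U^{r-j-1}-U_0^{r-j-1}\in\mathcal{B}_2$ and $A\in\mathcal{B}_2$, a product of two Hilbert–Schmidt operators being trace class. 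The case $r\le-1$ is handled identically by taking adjoints. The main obstacle I anticipate is the weak-compactness step: the natural uniform bound lives in $L^1$, whose unit ball is not weak-$*$ compact, so I must either work in $(L^\infty)^*$ and afterwards argue that the limiting functional is absolutely continuous (hence represented by an $L^1$ function), or exploit the explicit formula \eqref{eqfin2} for $\int f\,\eta_{o,n}$ together with the Hilbert–Schmidt estimate \eqref{mainest1} to show equicontinuity/uniform integrability of $\{\eta_{o,n}\}$ and invoke the Dunford–Pettis theorem to secure relative weak compactness genuinely inside $L^1$.
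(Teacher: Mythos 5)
Your overall architecture (the finite-dimensional formula of Theorem \ref{th2} fed through the reduction of Theorem \ref{th3}, then a limit in $n$) is exactly the paper's, and both your trace-class argument and your uniqueness argument are fine. The genuine gap is the limit-extraction step, which you correctly flag but do not resolve, and neither of your proposed fixes works as stated. A uniform bound $\norm{\eta_{o,n}}_{L^1}\leq\frac{\pi}{2}\norm{A}_2^2$ together with convergence of all Fourier coefficients does \emph{not} produce an $L^1$ limit: the Fej\'er kernels are uniformly bounded in $L^1$ with convergent Fourier coefficients, yet their weak-$*$ limit is a Dirac mass. Banach--Alaoglu in $(L^\infty)^*$ (or in the space of measures) therefore only yields a limit functional that need not be absolutely continuous, and proving its absolute continuity is essentially the entire difficulty of the theorem. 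The Dunford--Pettis alternative requires uniform integrability of $\{\eta_{o,n}\}$, which does not follow from the estimate \eqref{mainest1}: that bound controls $\big|\int f\,\eta_{o,n}\big|$ by $\norm{f}_\infty$, so testing against $f=\operatorname{sgn}(\eta_{o,n})\chi_E$ gives no decay as $|E|\to 0$.

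The paper closes this gap by a different and stronger device: it shows $\{\eta_{o,n}\}$ is Cauchy in $L^1$-\emph{norm}. Starting from the explicit formula \eqref{eqfin0} for each $\eta_n$, integrating by parts and using the double operator integral representation as in \eqref{eqfin3}, one rewrites $\int_0^{2\pi}f(t)\{\eta_{o,n}(t)-\eta_{o,m}(t)\}\,dt$ as $\int_0^1\operatorname{Tr}\big[A_n\{g(U_{s,n})-g(U_s)\}-\cdots\big]ds$, i.e.\ one compares each finite-dimensional shift function to the \emph{same} infinite-dimensional objects $U_s$, $U_0$, $A$ rather than to each other directly. This yields $\big|\int f(\eta_{o,n}-\eta_{o,m})\big|\leq K_{m,n}\norm{f}_\infty$ with $K_{m,n}\to 0$ by the estimates collected in Remark \ref{rm}, uniformly over $\norm{f}_\infty\leq 1$; duality then gives $\norm{\eta_{o,n}-\eta_{o,m}}_{L^1}\leq K_{m,n}$, and completeness of $L^1$ produces $\eta$. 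This ``compare both to the limiting objects'' estimate is the idea missing from your plan; without it, or an equivalent equi-integrability bound, the passage from the uniformly bounded sequence to an $L^1$ function does not go through.
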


\begin{proof}
By Theorems \ref{th2} and \ref{th3}, we have that
\begin{align*}
		&\operatorname{Tr}\Big\{p(U)-p(U_0)-\at{\dfrac{d}{ds}p(U_s)}{s=0}\Big\}\\
		&=\lim_{n\to \infty}\operatorname{Tr}~\left[P_n\Big\{p(U_n)-p(U_{0,n})-\at{\dds}{s=0}p(U_{s,n}) \Big\}P_n\right]\\
		&=\lim\limits_{n\to \infty}\int_{0}^{2\pi} \dfrac{d^2}{dt^2} \big\{p(e^{it})\big\} \eta_{_n}(t)dt =\lim\limits_{n\to \infty}\int_{0}^{2\pi} \dfrac{d^2}{dt^2} \big\{p(e^{it})\big\}\eta_{o,n}(t)dt,
		\end{align*}
		where  
		\begin{equation}\label{boundeq}
       \eta_{o,n}(t) = \eta_n(t) -\frac{1}{2\pi} \int_0^{2\pi} \eta_n(s) ds~,~~t\in[0,2\pi]\quad \text{and}\quad \|\eta_{o,n}\|_{L^1([0,2\pi])}\leq \frac{\pi}{2} \|A\|_2^2. 
      \end{equation}
Next we want to show that $\big\{\eta_{o,n}\big\}$ is a Cauchy sequence in $L^1([0,2\pi])$. Indeed, for any $f\in L^\infty ([0,2\pi])$ we consider
\begin{align*}
f_o(t)=f(t)-\dfrac{1}{2\pi}\int_{0}^{2\pi}f(s)ds.
\end{align*} 
Now it is easy to observe that 
\begin{align*}
\int_{0}^{2\pi} f(t)\big\{\eta_{o,n}(t)-\eta_{o,m}(t)\big\}dt=\int_{0}^{2\pi} f_o(t)\big\{\eta_{n}(t)-\eta_{m}(t)\big\}dt,~ \int_{0}^{2\pi}f_o(t)dt=0 \quad \text{and}~\norm{f_o}_\infty\leq 2\norm{f}_\infty.
\end{align*} 
Therefore by following the idea contained in the paper of Gestezy et al.\cite{GePu} (see also \cite{ChSi}), using the expression \eqref{eqfin0} of $\eta$, using Fubini's theorem to interchange the orders of integration and integrating by-parts, we have for $g(e^{it})=\int\limits_{0}^{t} f_o(s)ds, ~t\in [0,2\pi]$ that
\begin{align*}
	&\int_{0}^{2\pi} f(t)\big\{\eta_{o,n}(t)-\eta_{o,m}(t)\big\}dt=\int_{0}^{2\pi} f_o(t)\big\{\eta_{_n}(t)-\eta_{_m}(t)\big\}dt \\
	&=\int_{0}^{2\pi}\ddt \big\{g(e^{it})\big\} \Bigg(\int_{0}^{1}\operatorname{Tr} \Big[A_n\big\{E_{0,n}(t)-E_{s,n}(t)\big\}-A_m\big\{E_{0,m}(t)-E_{s,m}(t)\big\}\Big]~ds\Bigg)~dt\\
	&=\int_{0}^{1} ds \int_{0}^{2\pi}\ddt \big\{g(e^{it})\big\}~\operatorname{Tr} \Big[A_n\big\{E_{0,n}(t)-E_{s,n}(t)\big\}-A_m\big\{E_{0,m}(t)-E_{s,m}(t)\big\}\Big]dt\\
	&=\int_{0}^{1} ds ~\Bigg(g(e^{it})~\operatorname{Tr} \Big[A_n\big\{E_{0,n}(t)-E_{s,n}(t)\big\}-A_m\big\{E_{0,m}(t)-E_{s,m}(t)\big\}\Big]\Bigg\lvert_{t=0}^{2\pi}\\
	&\hspace*{1in}-\int_{0}^{2\pi}g(e^{it})~\operatorname{Tr}
	\Big[A_n\big\{E_{0,n}(dt)-E_{s,n}(dt)\big\}-A_m\big\{E_{0,m}(dt)-E_{s,m}(dt)\big\}\Big]\Bigg)\\
	&=-\int_{0}^{1}ds\int_{0}^{2\pi}g(e^{it})~\operatorname{Tr} \Big[A_n\big\{E_{0,n}(dt)-E_{s,n}(dt)\big\}-A_m\big\{E_{0,m}(dt)-E_{s,m}(dt)\big\}\Big]\\
	&=\int_{0}^{1}ds~\operatorname{Tr} \Big[A_n\big\{g(U_{s,n})-g(U_{0,n})\big\}-A_m\big\{g(U_{s,m})-g(U_{0,m})\big\}\Big]\\
	&=\int_{0}^{1}ds~\operatorname{Tr} \Bigg[A_n\Big\{\big\{g(U_{s,n})-g(U_{s})\big\}-\big\{g(U_{0,n})-g(U_{0})\big\}\Big\}\\
	&\hspace*{1in}-A_m\Big\{\big\{g(U_{s,m})-g(U_{s})\big\}-\big\{g(U_{0,m})-g(U_{0})\big\}\Big\}+(A_n-A_m)\big\{g(U_{s})-g(U_{0})\big\}\Bigg],
	\end{align*}	
where $E_{s,n}(\cdot)$ and $E_{0,n}(\cdot)$ are the spectral measures determined uniquely by the unitary operators $U_{s,n}$ and $U_{0,n}$ respectively such that they are continuous at $t=0$ and noted that all the boundary terms vanishes. Next we note that as in \eqref{eqfin3}
\begin{align*}
 \nonumber P_n\big\{g(U_{s,n})-g(U_{s})\big\}P_n  = P_n\Bigg\{ \ipi\ipi\dfrac{g(e^{i\lambda})-g(e^{i\mu})}{e^{i\lambda}-e^{i\mu}} ~\mathcal{G}_n(d\lambda\times d\mu)\Big(P_n\big\{U_{s,n}-U_{s}\big\}\Big) \Bigg\}P_n,
\end{align*}
where $\mathcal{G}_n(\Delta\times \delta )(V)= E_{s,n}(\Delta)VE_s(\delta)$ ($V\in \mathcal{B}_2(\hil)$,~ $\Delta\times \delta \subseteq \mathbb{R}\times \mathbb{R}$ and $E_{s}(\cdot)$ is the spectral measure determined uniquely by the unitary operator $U_{s}$ such that it is continuous at $0$) extends to a spectral measure on $\mathbb{R}^2$ in the Hilbert space $\mathcal{B}_2(\hil)$ (equipped with the inner product derived from the trace) and its total variation is less than or equal to
$\|V\|_2$. Therefore 
\begin{equation*}
 \left\|P_n\big\{g(U_{s,n})-g(U_{s})\big\}P_n\right\|_2 \leq \pi \norm{f}_\infty \norm{P_n\big\{U_{s,n}-U_{s}\}}_2,
\end{equation*}
since $\vline\dfrac{g(e^{i\lambda})-g(e^{i\mu})}{e^{i\lambda}-e^{i\mu}}\vline \leq \dfrac{\pi}{2} \|f_o\|_\infty\leq \pi \|f\|_{\infty}$, for $\lambda,\mu \in [0,2\pi]$. But on the other hand 
\begin{align}
 \nonumber & \bnorm{P_n(U_{s,n}-U_{s})}_2 
\leq \bnorm{P_n(e^{isA_n}-e^{isA})U_{0,n}+P_ne^{isA}(U_{0,n}-U_{0})}_2\\
\nonumber&\leq \bnorm{P_n(e^{isA_n}-e^{isA})}_2+\bnorm{P_ne^{isA}P_n(U_{0,n}-U_{0})}_2+\bnorm{P_ne^{isA}P_n^\perp(U_{0,n}-U_{0})}_2\\
\nonumber&\leq \bnorm{P_n(e^{isA_n}-e^{isA})}_2+\bnorm{P_n(U_{0,n}-U_{0})}_2+2\bnorm{P_ne^{isA}P_n^\perp}_2\\
\nonumber &\leq \bnorm{P_nAP_n^\perp}_2+\bnorm{P_n(U_{0,n}-U_{0})}_2+2 s \bnorm{AP_n^\perp}_2
\end{align}
and hence 
\begin{equation}\label{mainest2}
 \Big|\operatorname{Tr} \Big[A_n\big\{g(U_{s,n})-g(U_{s})\big\}\Big]\Big|
 \leq \pi \norm{f}_\infty \norm{A}_2 \Big\{\bnorm{P_nAP_n^\perp}_2+\bnorm{P_n(U_{0,n}-U_{0})}_2+2 s \bnorm{AP_n^\perp}_2\Big\}.
\end{equation}
Similarly we conclude that
\begin{equation}\label{mainest3}
 \Big|\operatorname{Tr} \Big[A_n\big\{g(U_{0,n})-g(U_{0})\big\}\Big]\Big|
 \leq \pi \norm{f}_\infty \norm{A}_2 ~\norm{P_n(U_{0,n}-U_{0})}_2.
\end{equation}
Furthermore we also have
\begin{align}\label{mainest4}
\Big|\operatorname{Tr}\Big[(A_n-A_m)\big\{g(U_{s})-g(U_{0})\big\}\Big]\Big| & \nonumber \leq \pi \norm{f}_\infty~\norm{A_n-A_m}_2~\norm{U_s-U_0}_2\\
& \leq \pi \norm{f}_\infty~\norm{A_n-A_m}_2~(s\|A\|_2),
\end{align}
by using the estimate as in \eqref{mainest1}. Therefore using equations \eqref{mainest2},\eqref{mainest3} and \eqref{mainest4} 
we get 
\begin{align}
&\nonumber \Bigg|\int_{0}^{2\pi} f(t)\big\{\eta_{o,n}(t)-\eta_{o,m}(t)\big\}dt\Bigg|\\
& \leq  \nonumber \nonumber \int_{0}^{1}ds~\Bigg|\operatorname{Tr} \Bigg[A_n\Big\{\big\{g(U_{s,n})-g(U_{s})\big\}-\big\{g(U_{0,n})-g(U_{0})\big\}\Big\}\\
	& \nonumber \hspace*{1in}-A_m\Big\{\big\{g(U_{s,m})-g(U_{s})\big\}-\big\{g(U_{0,m})-g(U_{0})\big\}\Big\}+(A_n-A_m)\big\{g(U_{s})-g(U_{0})\big\}\Bigg]\Bigg|\\
	& \nonumber \leq K_{m,n}\|f\|_{\infty},
\end{align}
where 
\begin{align*}
K_{m,n}= &\pi \norm{A}_2 \Bigg[~\Big\{\bnorm{P_nAP_n^\perp}_2+\bnorm{P_n(U_{0,n}-U_{0})}_2+ \bnorm{AP_n^\perp}_2 +\norm{P_n(U_{0,n}-U_{0})}_2\Big\}\\
& \hspace{1in} +  ~\Big\{\bnorm{P_mAP_m^\perp}_2+\bnorm{P_m(U_{0,m}-U_{0})}_2+ \bnorm{AP_m^\perp}_2 +\norm{P_m(U_{0,m}-U_{0})}_2\Big\}\\
& \hspace{3in} + \frac{1}{2}~\|A_n-A_m\|_2\Bigg].
\end{align*}
Therefore by Hahn-Banach theorem $$\norm{\eta_{_{o,n}}-\eta_{_{o,m}}}_1=\sup\limits_{f\in L^\infty([0,2\pi]):\norm{f}_\infty=1}\Bigg|\int_{0}^{2\pi} f(t)\big\{\eta_{o,n}(t)-\eta_{o,m}(t)\big\}dt\Bigg|\leq K_{m,n}\to0~\text{as~}m,n\to\infty,$$
by using Remark \ref{rm} and hence $\{\eta_{_{o,n}}\}$ is a Cauchy sequence in $L^1([0,2\pi])$. Therefore there exists a $\eta\in L^1([0,2\pi])$ such that $\eta_{_{o,n}}$ converges to $\eta$ in
$L^1([0,2\pi])$ norm. Thus 
\begin{align}\label{traceformulaeq}
		\operatorname{Tr}\Big\{p(U)-p(U_0)-\at{\dfrac{d}{ds}p(U_s)}{s=0}\Big\}=\lim_{n\rightarrow \infty}\int_{0}^{2\pi} \dfrac{d^2}{dt^2} \big\{p (e^{it})\big\} \eta_{o,n}(t) dt
		=\int_{0}^{2\pi} \dfrac{d^2}{dt^2} \big\{p (e^{it})\big\} \eta(t) dt.
\end{align}
Moreover, from \eqref{boundeq} it follows that 
$\norm{\eta}_{L^1([0,2\pi])}\leq\dfrac{\pi}{2}\norm{A}_2^2.$ 
Regarding uniqueness of $\eta$, let $\eta_1$ and $\eta_2$ be two $L^1([0,2\pi])$ functions which satisfy \eqref{traceformulaeq} for any polynomial $p(\cdot)$ on $\mathbb{T}$. Now by considering $p(z)=z^n$ for $n\in \mathbb{Z}\setminus \{0\}$ we get
\[
 \int_{0}^{2\pi} e^{int} ~\big\{\eta_1(t)-\eta_2(t)\big\} dt =0\quad \quad \forall n \in \mathbb{Z}\setminus \{0\},
\]
and consequently uniqueness of Fourier series implies $(\eta_1-\eta_2)$ is constant. This completes the proof. 
\end{proof}
Our next aim is to extend the class of functions $\phi$ for which the trace formula \eqref{intequ4} hold true. 
\begin{lma}\label{l4}
	Let $f_n(s)=a_nU^n_s,$ where $a_n\in \mathbb{C}$ and $U_s=e^{isA}U_0$ as in the statement of Theorem~\ref{th4} be such that $\sum\limits_{n=-\infty}^{\infty}n^2|a_n|<\infty$. Then 
	\begin{equation}\label{eqextendclass}
	\at{\dds}{s=0}\left(\sum_{n=-\infty}^{\infty} f_n(s)\right)=\sum_{n=-\infty}^{\infty}~\left(\at{\dds}{s=0} f_n(s)\right),
	\end{equation} 
	where the infinite series on both sides of \eqref{eqextendclass} converge in operator norm.
\end{lma}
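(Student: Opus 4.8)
The plan is to read \eqref{eqextendclass} as a term-by-term differentiation of the $\mathcal{B}(\hil)$-valued series $s\mapsto\sum_n f_n(s)$, and to justify the interchange by the classical theorem: if the partial sums converge and the series of derivatives converges \emph{uniformly} in $s$, then the sum is differentiable and its derivative is obtained termwise. Working in the Banach space $\mathcal{B}(\hil)$ equipped with the operator norm, everything reduces to producing $s$-independent summable majorants, which is where the unitarity of $U_s$ is decisive.

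First I would record two elementary consequences of the hypothesis. Since $1\le |n|\le n^2$ for $|n|\ge 1$, the assumption $\sum_n n^2|a_n|<\infty$ forces both $\sum_n|a_n|<\infty$ and $\sum_n|n|\,|a_n|<\infty$. (In fact only the weaker summability $\sum_n|n|\,|a_n|<\infty$ is needed here; the stronger hypothesis is the natural one for the subsequent application to functions $\phi$ whose \emph{second} derivative has absolutely convergent Fourier series.) Next, because each $U_s$ is unitary we have $\|U_s^n\|=1$, so $\|f_n(s)\|=|a_n|$ uniformly in $s$; the Weierstrass $M$-test then shows that $g(s):=\sum_n f_n(s)=\sum_n a_nU_s^n$ converges in operator norm, uniformly in $s\in\R$. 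Write $S_N(s)=\sum_{|n|\le N}f_n(s)$ for the partial sums, so that $S_N\to g$ uniformly.

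I would then turn to the differentiated series. By Theorem~\ref{th2}(i), for $n\ge 1$ one has $\dds f_n(s)=a_n\sum_{k=0}^{n-1}U_s^{n-k-1}(iA)U_s^{k+1}$, and the analogous expression with $U_s^*$ holds for $n\le -1$; in either case each summand is a product of unitaries with the single factor $iA$, so the triangle inequality yields the uniform bound $\bnorm{\dds f_n(s)}\le |n|\,\|A\|\,|a_n|$ for every $s$. Since $\sum_n|n|\,\|A\|\,|a_n|<\infty$, a second application of the Weierstrass $M$-test shows that $\sum_n\dds f_n(s)$ converges in operator norm, uniformly in $s\in\R$; in particular the right-hand side of \eqref{eqextendclass} converges in operator norm. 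Applying the term-by-term differentiation theorem for $\mathcal{B}(\hil)$-valued maps to the sequence $S_N$ (whose derivatives $S_N'$ converge uniformly while $S_N\to g$ pointwise) shows that $g$ is differentiable with $g'(s)=\sum_n\dds f_n(s)$ for every $s$, and evaluating at $s=0$ gives precisely \eqref{eqextendclass}. The one point requiring care is the uniform convergence of the differentiated series; this is exactly where unitarity of $U_s$ enters, furnishing the $s$-independent majorant $|n|\,\|A\|\,|a_n|$, after which summability of $\sum_n|n|\,|a_n|$ makes the interchange automatic.
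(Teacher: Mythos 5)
Your proof is correct, but it takes a genuinely different route from the paper's. The paper argues directly from the definition of the derivative: it forms the difference quotient of the whole series at $s=0$, subtracts the candidate limit term by term, and bounds the result by $\Big\{\sum_{n\ge 1}\big(|a_n|+|a_{-n}|\big)\big[\tfrac{n(n-1)}{2}\norm{A}^2+n\,(e^{\norm{A}}-\norm{A}-1)\big]\Big\}\,|s|$, which tends to $0$ as $s\to 0$; the quadratic factor $\tfrac{n(n-1)}{2}$ is precisely where the hypothesis $\sum n^2|a_n|<\infty$ enters. You instead invoke the classical theorem on term-by-term differentiation of Banach-space-valued series, verified via the Weierstrass $M$-test with the $s$-independent majorant $|n|\,\norm{A}\,|a_n|$ supplied by unitarity. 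Your route needs only the weaker summability $\sum |n|\,|a_n|<\infty$ for this particular lemma (you note correctly that the full $n^2$-summability is what the trace-norm estimates of Theorem~\ref{th5} require), and it yields differentiability of the sum at every $s$, not only at $s=0$. The paper's computation, in exchange, produces an explicit $O(|s|)$ rate for the remainder and avoids quoting the interchange theorem, whose proof in the Banach-space setting rests on the mean value inequality; your appeal to it is nonetheless legitimate. The one hypothesis of that theorem worth making explicit is that each $f_n$ is norm-differentiable in a neighbourhood of $0$ with derivative given by \eqref{derivexpeq}, which is exactly what Theorem~\ref{th2}$(i)$ supplies and which you do cite.
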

\begin{proof}  
The expression in \eqref{derivexpeq} along with the fact 
$\sum\limits_{n=0}^{\infty}n^2|a_n|<\infty$  implies both infinite series in \eqref{eqextendclass} converge in operator norm. Next we denote
 $\tau_n=\operatorname{sgn}(n), n\in\mathbb{Z}$. Then the definition of G$\hat{a}$teaux derivative and the following estimate  
{\begin{align*}
&\left\|\frac{1}{s}\left[\sum_{n=-\infty}^{\infty}a_n{U_s^{\tau_n}}^{|n|}-\sum_{n=-\infty}^{\infty}a_n{U_0^{\tau_n}}^{|n|}\right]-\sum_{n=-\infty}^{\infty}a_n 
\begin{cases}
\sum\limits_{j=0}^{|n|-1}U_0^{|n|-j-1}~(iA)~ U_s^{j+1} &\text{ if } n\geq 1,\\
\qquad 0 &\text{ if } n=0,\\
-\sum\limits_{j=0}^{|n|-1}(U_0^*)^{|n|-j}~(iA)~( U_0^*)^{j} &\text{ if } n\leq -1,
\end{cases}\right\|\\
\leq &\left\{\sum_{n=1}^{\infty}\left(\Big\{|a_n|+|a_{-n}|\Big\}\cdot\left[\frac{n(n-1)}{2}\norm{A}^2+n\left(e^{\norm{A}}-\norm{A}-1\right)\right]\right)\right\}\cdot|s|\quad\longrightarrow 0 \text{ as } s\longrightarrow  0,
\end{align*}}
yields equation \eqref{eqextendclass}. 
\end{proof}
Let $\mathcal{A}_\cir:=\Big\{\Phi\big\lvert~\Phi:\cir\to\mathbb{C},~ \Phi(z)=\sum\limits_{n=-\infty}^{\infty}a_nz^n ~\text{with}~\sum\limits_{n=-\infty}^{\infty}n^2|a_n|<\infty\Big\}$.
\begin{thm}\label{th5}
	Let $U$ and $U_0$ be two unitary operators in an  infinite dimensional separable Hilbert space $\hil$ such that $U-U_0\in\hils$. Then for any $\Phi\in\mathcal{A}_\cir$,  $\left\{\Phi(U)-\Phi(U_0)-\at{\dfrac{d}{ds}\Phi(U_s)}{s=0}\right\}\in\boh$ and there exists a $L^1([0,2\pi])$- function $\eta$, unique up to an additive constant, such that
	\begin{align*}
	\operatorname{Tr}\Big\{\Phi(U)-\Phi(U_0)-\at{\dfrac{d}{ds}}{s=0}\Phi(U_s)\Big\}=\int_{0}^{2\pi} \dfrac{d^2}{dt^2} \big\{\Phi (e^{it})\big\}\eta(t) dt.
	\end{align*}
\end{thm}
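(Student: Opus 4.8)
The plan is to approximate an arbitrary $\Phi\in\mathcal{A}_\cir$ by its partial Fourier sums $p_N(z)=\sum_{|n|\le N}a_nz^n$, which are trigonometric polynomials on $\cir$, apply the already established Theorem~\ref{th4} to each $p_N$, and pass to the limit $N\to\infty$. The crucial point is that the spectral shift function $\eta$ furnished by Theorem~\ref{th4} depends only on the pair $(U,U_0)$ and not on the polynomial, so a single $\eta$ serves all the $p_N$ simultaneously. Two things then have to be controlled: on the left-hand side the convergence of the traces (for which I would prove convergence in the trace norm $\norm{\cdot}_1$), and on the right-hand side the convergence of the integrals against the fixed $\eta\in L^1([0,2\pi])$.

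For the left-hand side, set $T_n:=U^n-U_0^n-\at{\dds U_s^n}{s=0}$ for $n\in\Int$ (so $T_0=0$). Using the telescoping identity for $U^n-U_0^n$ together with \eqref{derivexpeq}, I would split, for $n\ge 1$,
\[
T_n=\sum_{j=0}^{n-1}\Big[U^{n-j-1}(e^{iA}-iA-I)U_0^{j+1}+(U^{n-j-1}-U_0^{n-j-1})(iA)U_0^{j+1}\Big],
\]
and symmetrically for $n\le -1$. The key observations are that $e^{iA}-iA-I=-A^2\int_0^1(1-t)e^{itA}\,dt\in\boh$ with $\bnorm{e^{iA}-iA-I}_1\le\tfrac12\norm{A}_2^2$ (since $\norm{A^2}_1=\norm{A}_2^2$ and $e^{itA}$ is unitary), while $\norm{U^m-U_0^m}_2\le |m|\,\norm{e^{iA}-I}_2\le|m|\,\norm{A}_2$. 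Estimating the first summand in trace norm by unitarity and the second via $\norm{XY}_1\le\norm{X}_2\norm{Y}_2$ yields $\norm{T_n}_1\le\tfrac12 n^2\norm{A}_2^2$, hence $\sum_n|a_n|\,\norm{T_n}_1\le\tfrac12\norm{A}_2^2\sum_n n^2|a_n|<\infty$. Thus $\sum_n a_nT_n$ converges absolutely in $\boh$; combining this with the operator-norm identities $\Phi(U)=\sum_n a_nU^n$, $\Phi(U_0)=\sum_n a_nU_0^n$ and Lemma~\ref{l4} for the G$\hat{a}$teaux derivative identifies $\Phi(U)-\Phi(U_0)-\at{\dds\Phi(U_s)}{s=0}=\sum_n a_nT_n$ as a trace-class operator, and continuity of the trace on $\boh$ gives $\operatorname{Tr}\{\cdots\}=\lim_N\operatorname{Tr}\{p_N(U)-p_N(U_0)-\at{\dds p_N(U_s)}{s=0}\}$.

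For the right-hand side, $\frac{d^2}{dt^2}\{p_N(e^{it})\}=-\sum_{|n|\le N}a_nn^2e^{int}$ converges uniformly to $\frac{d^2}{dt^2}\{\Phi(e^{it})\}$ (again by $\sum_n n^2|a_n|<\infty$), so since $\eta\in L^1([0,2\pi])$ the integrals converge to $\int_0^{2\pi}\frac{d^2}{dt^2}\{\Phi(e^{it})\}\eta(t)\,dt$; applying Theorem~\ref{th4} to each $p_N$ and letting $N\to\infty$ then produces the asserted formula with this same $\eta$. Uniqueness up to an additive constant follows exactly as in Theorem~\ref{th4}: taking $\Phi(z)=z^n\in\mathcal{A}_\cir$ for each $n\in\Int\setminus\{0\}$ forces all nonzero Fourier coefficients of the difference of two admissible shift functions to vanish. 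I expect the only real obstacle to be the trace-norm estimate $\norm{T_n}_1=O(n^2\norm{A}_2^2)$ — in particular the trace-class bound on $e^{iA}-iA-I$ and the linear-in-$m$ Hilbert–Schmidt bound on $U^m-U_0^m$ — while the two limiting passages are routine.
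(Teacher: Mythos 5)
Your proposal is correct and follows essentially the same route as the paper: expand $\Phi$ in its Fourier series, invoke Lemma~\ref{l4} for the term-by-term G\^ateaux derivative, establish the trace-norm bound $\norm{T_n}_1=O(n^2\norm{A}_2^2)$ via the splitting into $U^{n-j-1}(e^{iA}-iA-I)U_0^{j+1}$ and $(U^{n-j-1}-U_0^{n-j-1})(iA)U_0^{j+1}$, and then sum the monomial trace formula from Theorem~\ref{th4} using $\sum_n n^2|a_n|<\infty$. The only (harmless) difference is that you bound $\bnorm{e^{iA}-iA-I}_1\le\tfrac12\norm{A}_2^2$ via the integral form of the Taylor remainder, where the paper uses the series remainder to get the constant $\norm{A}^{-2}(e^{\norm{A}}-\norm{A}-1)$.
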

\begin{proof} Using the above Lemma \ref{l4} we have
	\begin{align}
	\nonumber\Phi(U)-\Phi(U_0)-\at{\dfrac{d}{ds}}{s=0}\Phi(U_s)=&\sum_{n=-\infty}^{\infty}a_n{U^{\tau_n}}^{|n|}-\sum_{n=-\infty}^{\infty}a_n{U_0^{\tau_n}}^{|n|}-\at{\dds}{s=0}\left(\sum_{n=-\infty}^{\infty}a_n{U_s^{\tau_n}}^{|n|}\right)\\
	\label{eq15}=&\sum_{n=-\infty}^{\infty}a_n\Bigg[{U^{\tau_n}}^{|n|}-{U_0^{\tau_n}}^{|n|}-\at{\dds}{s=0}{U_s^{\tau_n}}^{|n|}\Bigg].
	\end{align}
Moreover, using \eqref{derivexpeq} we conclude that $\Big({U^{\tau_n}}^{|n|}-{U_0^{\tau_n}}^{|n|}-\at{\dds}{s=0}{U_s^{\tau_n}}^{|n|}\Big)$  is trace class and the following trace norm estimate
\vspace{1in}

{\begin{align*}
	&\Bnorm{{U^{\tau_n}}^{|n|}-{U_0^{\tau_n}}^{|n|}-\at{\dds}{s=0}{U_s^{\tau_n}}^{|n|}}_1\\
	=&~\left\|{U^{\tau_n}}^{|n|}-{U_0^{\tau_n}}^{|n|}- \begin{cases}
		\sum\limits_{j=0}^{|n|-1}U_0^{|n|-j-1}~(iA)~ U_s^{j+1} &\text{ if } n\geq 1,\\
		\qquad 0 &\text{ if } n=0,\\
		-\sum\limits_{j=0}^{|n|-1}(U_0^*)^{|n|-j}~(iA)~( U_0^*)^{j} &\text{ if } n\leq -1
		\end{cases}\right\|_1\\
\leq~ &\left[\frac{|n|(|n|-1)}{2}+|n|\norm{A}^{-2}\left(e^{\norm{A}}-\norm{A}-1 \right)\right]\norm{A}_2^2
	\end{align*}}
	implies \begin{align*}
	\nonumber&\sum_{n=-\infty}^{\infty}|a_n|\Bnorm{U^n-U_0^n-\at{\dds}{s=0}U_s^n}_1\\
	\leq&~\sum_{n=1}^{\infty}(|a_n|+|a_{-n}|)\left[\frac{n(n-1)}{2}+n \norm{A}^{-2}\left(e^{\norm{A}}-\norm{A}-1 \right)\right]\norm{A}_2^2<\infty.
	\end{align*}
Therefore the series in \eqref{eq15} converges in trace norm and hence $\left\{\Phi(U)-\Phi(U_0)-\at{\dfrac{d}{ds}}{s=0}\Phi(U_s)\right\}$ is trace class and furthermore
	\begin{align}\label{eq16}
	\operatorname{Tr}\Big\{\Phi(U)-\Phi(U_0)-\at{\dfrac{d}{ds}}{s=0}\Phi(U_s)\Big\}=\sum_{n=-\infty}^{\infty}a_n~\operatorname{Tr}\Big[U^n-U_0^n-\at{\dds}{s=0}U_s^n\Big].
	\end{align}
	Thus by combining Theorem \ref{th4} and \eqref{eq16} and applying Fubinni's theorem we get
	\begin{align*}
	\operatorname{Tr}\Big\{\Phi(U)-\Phi(U_0)-\at{\dfrac{d}{ds}}{s=0}\Phi(U_s)\Big\}=&\sum_{n=-\infty}^{\infty}\ipi (-n^2a_ne^{int}) \eta(t)dt
	=\ipi\frac{d^2}{dt^2}~\big\{\Phi(e^{it})\big\} \eta(t) dt.
	\end{align*}
	This completes the proof. 
\end{proof}
\begin{crlre}
If $U$ and $U_0$ are two unitary operators in an infinite dimensional separable Hilbert space $\hil$ such that $U-U_0\in\hils$. Then there exists a $L^1([0,2\pi])$- function $\eta$, unique up to an additive constant, such that for any $z\in\mathbb{C}$ with $|z|\neq 1$,
\begin{align*}
\operatorname{Tr}\Big\{(U-z)^{-1}-(U_0-z)^{-1}-\at{\dds}{s=0}(U_s-z)^{-1} \Big\}=\ipi \frac{d^2}{dt^2}\big\{(e^{it}-z)^{-1}\big\} \eta(t)dt.
\end{align*}
\end{crlre}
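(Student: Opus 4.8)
The plan is to recognize the resolvent as a member of the class $\mathcal{A}_\cir$ and then invoke Theorem~\ref{th5} verbatim. Fix $z\in\Comp$ with $|z|\neq 1$ and set $\Phi_z(w)=(w-z)^{-1}$ for $w\in\cir$; since $|z|\neq 1$, the point $z$ avoids the unit circle, so $\Phi_z$ is a genuine (smooth) function on $\cir$, and because $U$, $U_0$, $U_s$ are unitary with spectra contained in $\cir$, the operators $(U-z)^{-1}$, $(U_0-z)^{-1}$, $(U_s-z)^{-1}$ are all bounded. The only thing that requires checking is that $\Phi_z\in\mathcal{A}_\cir$, that is, that its Fourier coefficients $a_n$ satisfy $\sum_{n=-\infty}^{\infty}n^2|a_n|<\infty$.

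First I would compute the Laurent expansion of $\Phi_z$ on $\cir$ in the two regimes. If $|z|<1$, then writing $w=e^{it}$ and expanding $\frac{1}{w}\cdot\frac{1}{1-z/w}$ in a geometric series gives $\Phi_z(w)=\sum_{n\le -1}z^{-n-1}w^{n}$, so that $a_n=z^{-n-1}$ for $n\le-1$ and $a_n=0$ otherwise; setting $m=-n$ one finds $\sum_{n=-\infty}^{\infty}n^2|a_n|=\sum_{m=1}^{\infty}m^2|z|^{m-1}<\infty$ since $|z|<1$. If instead $|z|>1$, then expanding $-\frac{1}{z}\cdot\frac{1}{1-w/z}$ gives $\Phi_z(w)=-\sum_{n\ge 0}z^{-n-1}w^{n}$, whence $a_n=-z^{-n-1}$ for $n\ge 0$ and $a_n=0$ otherwise, and $\sum_{n=-\infty}^{\infty}n^2|a_n|=\sum_{n=0}^{\infty}n^2|z|^{-n-1}<\infty$ because $|z|^{-1}<1$. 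In both cases $\Phi_z\in\mathcal{A}_\cir$.

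With this in hand I would simply apply Theorem~\ref{th5} to $\Phi=\Phi_z$. By that theorem the operator $\{\Phi_z(U)-\Phi_z(U_0)-\at{\dds\Phi_z(U_s)}{s=0}\}$ is trace class, its G$\hat{a}$teaux derivative is the term-by-term derivative supplied by Lemma~\ref{l4}, and the trace equals $\ipi\frac{d^2}{dt^2}\{\Phi_z(e^{it})\}\,\eta(t)\,dt$, which is exactly the asserted identity once we write $\Phi_z(e^{it})=(e^{it}-z)^{-1}$. Crucially, the spectral shift function $\eta$ produced in Theorem~\ref{th4} (and reused in Theorem~\ref{th5}) is built solely from the pair $(U,U_0)$ through the operator $A$ and the associated spectral measures, and hence does not depend on $z$; thus a single $\eta$ serves simultaneously for all $z$ with $|z|\neq 1$.

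Finally, uniqueness up to an additive constant is inherited directly from Theorem~\ref{th5}: if two $L^1([0,2\pi])$ functions both represent the trace for the monomials $p(w)=w^n$, $n\in\Int\setminus\{0\}$, then their difference has vanishing nonzero Fourier coefficients and is therefore constant. I expect no genuine obstacle here; the only mildly delicate point is confirming the weighted summability $\sum_{n=-\infty}^{\infty}n^2|a_n|<\infty$ that is built into the definition of $\mathcal{A}_\cir$, and this is immediate from the geometric decay of $|a_n|$ computed above.
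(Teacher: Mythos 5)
Your proof is correct and is exactly the argument the paper intends: the paper states this corollary without proof as an immediate consequence of Theorem~\ref{th5}, and your verification that $(w-z)^{-1}$ lies in $\mathcal{A}_\cir$ (via the Laurent expansions $\sum_{n\le -1}z^{-n-1}w^{n}$ for $|z|<1$ and $-\sum_{n\ge 0}z^{-n-1}w^{n}$ for $|z|>1$, whose coefficients decay geometrically so that $\sum n^2|a_n|<\infty$) supplies precisely the missing hypothesis check. The observation that $\eta$ comes from Theorem~\ref{th4} and is independent of $z$ is also the right reading of the statement.
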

    
\section*{Acknowledgements}
\textit{The research of the first named author is supported by the Mathematical Research Impact Centric
Support (MATRICS) grant, File No :MTR/2019/000640, by the Science and Engineering Research Board (SERB), Department of Science $\&$ Technology (DST), Government of India. The second and the third named author gratefully acknowledge the support provided by IIT Guwahati, Government of India.}

\end{document}